\begin{document}

\newtheorem{Theorem}{Theorem}[section]
\newtheorem{Conjecture}[Theorem]{Conjecture}
\newtheorem{Proposition}[Theorem]{Proposition}
\newtheorem{Lemma}[Theorem]{Lemma}
\newtheorem{Corollary}[Theorem]{Corollary}
\newtheorem{Remark}[Theorem]{Remark}
\newtheorem{Example}[Theorem]{Example}

\newcommand{\ord}{\mathop{\rm ord}\nolimits}
\newcommand{\ddiv}{\mathop{\rm div}\nolimits}

\newcommand{\mm}{\mathop{\rm mod}\nolimits}
\newcommand{\Spec}{\mathop{\rm Spec}\nolimits}
\newcommand{\gr}{\mathop{\rm gr}\nolimits}
\newcommand{\tr}{\mathop{\rm tr}\nolimits} 
\newcommand{\Cor}{\mathop{\rm Cor}\nolimits}
\newcommand{\Coker}{\mathop{\rm Coker}\nolimits}
\newcommand{\Image}{\mathop{\rm Image}\nolimits}
\newcommand{\tors}{\mathop{\rm tors}\nolimits}
\newcommand{\Res}{\mathop{\rm Res}\nolimits}
\newcommand{\Gal}{\mathop{\rm Gal}\nolimits}
\newcommand{\Div}{\mathop{\rm Div}\nolimits}
\newcommand{\Tor}{\mathop{\rm Tor}\nolimits}
\newcommand{\Fitt}{\mathop{\rm Fitt}\nolimits}
\newcommand{\Ann}{\mathop{\rm Ann}\nolimits}
\newcommand{\Cl}{\mathop{\rm Cl}\nolimits}
\newcommand{\ii}{\mathop{\rm Image}\nolimits}
\newcommand{\re}{\mathop{\rm Re}\nolimits}
\newcommand{\Sel}{\mathop{\rm Sel}\nolimits}
\newcommand{\cc}{\mathop{\rm char}\nolimits}
\newcommand{\Gel}{\mathop{\mathfrak l}\nolimits}
\newcommand{\Gp}{\mathop{\mathfrak p}\nolimits}
\newcommand{\GO}{\mathop{\mathfrak O}\nolimits}
\newcommand{\GA}{\mathop{\mathfrak A}\nolimits}
\newcommand{\GB}{\mathop{\mathfrak B}\nolimits}
\newcommand{\GH}{\mathop{\mathfrak H}\nolimits}
\newcommand{\GW}{\mathop{\mathfrak W}\nolimits}
\newcommand{\GX}{\mathop{\mathfrak X}\nolimits}
\newcommand{\ZZ}{\mathop{\mathbb Z}\nolimits}
\newcommand{\QQ}{\mathop{\mathbb Q}\nolimits}
\newcommand{\OO}{\mathop{\mathcal O}\nolimits}


\newcommand{\rank}{\mathop{\rm rank}\nolimits}
\newcommand{\image}{\mathop{\rm Im}\nolimits}
\newcommand{\sgn}{\mathop{\rm sgn}\nolimits}
\newcommand{\Hom}{\mathop{\mbox{\rm Hom}}\nolimits}
\newcommand{\Ker}{\mathop{\mbox{\rm Ker}}\nolimits}
\newcommand{\Br}{\mathop{\mbox{\rm Br}}\nolimits}
\newcommand{\mapdown}[1]{\Big\downarrow
 \rlap{$\vcenter{\hbox{$\scriptstyle#1$}}$ }}

\newcommand{\maru}{\mathrel{
\setlength{\unitlength}{.1cm}
\begin{picture}(1.5,2.5)(0,0)
\put(0.75,1){\circle{0.75}}
\end{picture}
}}

\title{
Notes on the dual of the ideal class groups of CM-fields}
\author{Masato Kurihara}
\date{}
\maketitle

\begin{abstract}
In this paper, for a CM abelian extension $K/k$ of number fields, 
we propose a conjecture which describes completely the Fitting ideal of 
the minus part of the Pontryagin dual of the $T$-ray class group of $K$ 
for a set $T$ of primes as a $\Gal(K/k)$-module. 
Here, we emphasize 
that we consider the full class group, and 
do not throw away the ramifying primes 
(namely, the object we study is {\it not} 
the quotient of the class group by the subgroup generated by 
the classes of ramifying primes). 
We prove that our conjecture is a consequence of 
the equivariant Tamagawa number conjecture, 
and also prove that the Iwasawa theoretic version of 
our conjecture holds true under the assumption $\mu=0$
without assuming eTNC.
\end{abstract}

\section{Introduction} \label{Intro}

It is an important and central theme in number theory to 
pursue the relationship between the arithmetic objects such as  
class groups of number fields and the analytic objects such as 
values of $L$-functions. 
Let $k$ be a totally real number field and 
$K/k$ a finite abelian extension 
with Galois group $G=\Gal(K/k)$ 
such that $K$ is a CM-field. 
Then the Stickelberger element $\theta_{K,S}(0)$
(for the definition, see (\ref{EquivariantSTheta})) is 
related to the class group of $K$, which we regard as a $G$-module.
For example, Brumer's conjecture says that, roughly speaking, 
the Stickelberger element is 
in the annihilator of the class group. 
It is also in the Fitting ideal of the class group in several cases, 
and 
the determination of the Fitting ideal of the class group is an 
important subject in Iwasawa theory (\cite{Ku1}). 
If $k=\QQ$, it was proven that 
the Fitting ideal of the class group 
of $K$ is equal to the Stickelberger ideal 
(except the $2$-component, see \cite{KM}). 
However, for a general totally real field $k$, 
the Pontryagin dual of the class group is the right object to 
study the Fitting ideal (see \cite{GreiKuri}). 
In \cite{Grei4} Greither determined the Fitting ideal of 
the dual of the class group assuming 
the equivariant Tamagawa number conjecture 
and that the group of roots of unity is cohomologically trivial. 

For any finite set $S$ of primes of $k$ 
we denote by $S_{K}$ the set of primes of $K$ above $S$.
For a finite set $T$ of primes of $k$ that are unramified in $K$,  
let $Cl_{K}^{T}$ be the $(\Pi_{w \in T_{K}} w)$-ray class group of $K$. 
In this paper we 
study $Cl_{K}^{T}$ and 
generalize the main result in \cite{Grei4} to $Cl_{K}^{T}$ 
(see Corollary \ref{MainCorollary}).
We note that we do not assume the cohomological triviality of the 
group of roots of unity as in \cite{Grei4}. 

Let $S$ be a finite set of primes of $k$ containing all 
infinite primes and ramifying primes in $K$ such that 
$S \cap T= \emptyset$. 
We denote by $Cl_{K,S}^{T}$ the quotient of $Cl_{K}^{T}$
by the subgroup generated by 
the classes of finite primes in $S_{K}$. 
Burns, Sano and the author proved as a special case
of Theorem 1.5 (i) in \cite{BKS1} that 
the equivariant Tamagawa number conjecture (``eTNC" in short) implies 
that the Fitting ideal of a certain Selmer module is generated by 
the Stickelberger element $\theta_{K,S}^{T}$, and  
$Cl_{K,S}^{T}$ appears as a subgroup of the Selmer module. 
Since $Cl_{K,S}^{T}$ is a subgroup, this does not give 
information on the Fitting ideal of $Cl_{K,S}^{T}$ in general. 
Also, $Cl_{K,S}^{T}$ is smaller than the full class group 
$Cl_{K}^{T}$ which we want to study.

In order to overcome these difficulties we use the beautiful ideas  
in Greither's paper \cite{Grei4}.
An important idea in \cite{Grei4} which we also use here is 
to use ``the local modules" $W_{K_{w}}$, $W_{v}$ 
by Gruenberg and Weiss \cite{GruW}, which we will introduce 
in \S \ref{sec2}.
In this sense, this paper heavily relies on the ideas in \cite{Grei4}.  
A new idea in this paper is to consider a Tate sequence using 
linear duals 
$M^{\circ}=\Hom(M, \ZZ)$ of modules $M$ 
(see the exact sequences in Proposition \ref{PropCalS} 
and Proposition \ref{P2}). 

In \S \ref{sec2} we introduce homomorphisms $\psi_{S}$, $\psi$ 
for a general Galois extension of number fields.
The Pontryagin dual of $Cl_{K}^{T}$ appears as the cokernel of 
the linear dual $\psi^{\circ}$ of $\psi$ (see Proposition \ref{P2}). 
The complex $\GA \stackrel{\psi_{S}}{\longrightarrow} \GB$ 
represents $R\Gamma_{T}(\OO_{K,S}, {\mathbb G}_{m})$ in \cite{BKS1}.
We compare in \S \ref{ComparisonofTwoHom} 
the two homomorphisms $\psi_{S}^{\circ}$, $\psi^{\circ}$ 
in order to get information on $Cl_{K}^{T}$.

In \S \ref{ComparisonofTwoHom} we propose Conjecture \ref{C1} which 
describes completely the Fitting ideal of 
the minus part of the Pontryagin dual of 
$Cl_{K}^{T}$, and prove it assuming Conjecture \ref{C2} 
which is a conjecture on the homomorphisms $\psi_{S}$. 
We show that eTNC implies 
Conjecture \ref{C2} (see Proposition \ref{eTNCimpliesC2}), 
so also implies Conjecture \ref{C1} 
(see Corollary \ref{MainCorollary}).  
We use eTNC in the style of \cite{BKS1} in Proposition \ref{eTNCimpliesC2}.

In \S \ref{sec4} we also prove, without assuming eTNC, 
Theorem \ref{MT2} which is 
the Iwasawa theoretic version of 
Conjecture \ref{C1}, and which determines completely  
the Fitting ideal of the $T$-modified Iwasawa modules. 
Theorem \ref{MT2} can be regarded as 
a refinement of a result by Greither and 
Popescu \cite{GreiPo}, and a generalization of a result in \cite{Ku3} 
(see Remark \ref{FinalRemark}).

The author would like to thank Cornelius Greither for discussion 
with him on the subjects in this paper, and for his giving the author 
some comments after reading the first draft of this paper.

\section{$T$-class groups of number fields as Galois modules} \label{sec2}

\subsection{A homomorphism $\psi_{S}: \GA \longrightarrow \GB$} 
\label{ConstructionofpsiS} 

In this subsection we suppose that 
$K/k$ is a finite Galois extension of number fields with 
$G=\Gal(K/k)$.

The goal of this subsection is to define 
two $\ZZ[G]$-modules 
$\GA$, $\GB$, and a homomorphism $\psi_{S}:\GA \longrightarrow \GB$, 
which represents $R\Gamma_{T}(\OO_{K,S}, {\mathbb G}_{m})$ in \cite{BKS1}. 

For any finite set $S$ of primes of $k$ we denote by $S_{K}$ the 
set of primes of $K$ above $S$. 
Let $S_{\infty}$ be the set of all infinite primes of $k$. 
For any finite set $S$ of primes of $k$ 
such that $S \supset S_{\infty}$ we denote by 
$\OO_{K,S}$ the subring of $K$ consisting of integral elements 
outside $S$. 
The integer ring $\OO_{K,S_{\infty}}$ is denoted by $\OO_{K}$. 

We take and fix a finite set $T$ of finite primes of $k$ that are 
unramified in $K$ such that 
$(\OO_{K}^{T})^{\times}=\{x \in \OO_{K}^{\times} \mid 
x \equiv 1 \ \mbox{(mod $w$) for all primes $w$ above $T$}\}$ 
is $\ZZ$-torsion free. 

For a finite set $S$ of primes of $k$ such that
$S \supset S_{\infty}$ and $S \cap T = \emptyset$, 
we define 
$$(\OO_{K,S}^{T})^{\times}=\{x \in \OO_{K,S}^{\times} \mid 
x \equiv 1 \ \mbox{(mod $w$) for all primes $w \in T_{K}$}\}$$ 
and $Cl_{K,S}^{T}$ to be  
the ray class group of $\OO_{K,S}$ modulo $\Pi_{w \in T_{K}} w$. 

We define a subgroup $J_{K,S}^{T}$ of the id\`{e}le group of $K$ by
$$J_{K,S}^{T}=\prod_{w \in T_{K}} U_{K_{w}}^{1} \times 
\prod_{w \not \in (S \cup T)_{K}} U_{K_{w}} \times 
\prod_{w \in S_{K}}K_{w}^{\times}.$$

Let $S_{{\rm ram}}(K/k)$ be the set of 
all ramifying finite primes in $K/k$.
From now on we fix a finite set $S$ of primes of $k$ such that
$S \supset S_{\infty} \cup S_{{\rm ram}}(K/k)$ and 
$S \cap T = \emptyset$. 

We also take  
a finite set $S'$ of primes of $k$ 
such that (i) $S' \supset S$, 
(ii) $Cl_{K,S'}^{T}=0$ 
and that 
(iii) the decomposition groups $G_{v}$ of $v$ for all $v \in S'$ 
generate $G$.

Let $C_{K}$ be the idele class group of $K$. 
By definitions, we have an exact sequence 
\begin{equation} \label{e1}
0 \longrightarrow (\OO_{K,S}^{T})^{\times} \longrightarrow 
J_{K,S}^{T} \longrightarrow C_{K} \longrightarrow 
Cl_{K,S}^{T} \longrightarrow 0.
\end{equation}
From our assumption (ii) above, 
we also have an exact sequence 
$$0 \longrightarrow (\OO_{K,S'}^{T})^{\times} \longrightarrow 
J_{K,S'}^{T} \longrightarrow C_{K} \longrightarrow 0$$
for $S'$.

For any group ${\mathcal G}$, we denote by $\Delta {\mathcal G}$ 
the augmentation ideal in $\ZZ[{\mathcal G}]$. 
For a prime $w$ of $K$, 
we denote by $G_{w}$, $I_{w}$ the decomposition subgroup and 
the inertia subgroup of $w$ in $G$.
We consider $V_{K_{w}}$ the extension of 
$\Delta G_{w}$ by $K_{w}^{\times}$ corresponding to the local 
fundamental class (see Gruenberg and Weiss \cite{GruW}, 
Greither \cite{Grei4});
$0 \longrightarrow K_{w}^{\times} \longrightarrow V_{K_{w}} 
\longrightarrow \Delta G_{w} \longrightarrow 0$.
If $w$ is a finite prime, we define $W_{K_{w}}$ by 
$W_{K_{w}}=\Coker(U_{K_{w}} \longrightarrow V_{K_{w}})$. 
Thus we have an exact sequence 
$$ 0 \longrightarrow \ZZ \longrightarrow W_{K_{w}} \longrightarrow 
\Delta G_{w} \longrightarrow 0.$$
More explicitly, as in (23) on page 1412 in Greither \cite{Grei4},
one can write 
$$W_{K_{w}}= \Ker (\Delta G_{w} \times \ZZ[G_{w}/I_{w}] 
\longrightarrow
\ZZ[G_{w}/I_{w}])$$
where the above homomorphism is defined by $(x,y) \mapsto 
\overline{x} + ({\mathcal F}_{w}^{-1}-1)y$ with 
$\overline{x}=x$ mod $I_{w}$ and the Frobenius ${\mathcal F}_{w}$ of $w$ 
in $G_{w}/I_{w}$. 
Note that we are using a homomorphism which is slightly 
different from (23) in \cite{Grei4}.
This modification is necessary 
to get good bases of $\GB$ and $W_{S_{\infty}}^{\circ} 
\otimes \QQ$ later.
We note that if $w$ is unramified in $K/k$, $I_{w}=0$ and 
the projection to the second 
component $(x,y) \mapsto y$ gives an isomorphism 
$W_{K_{w}} \simeq \ZZ[G_{w}]$.

We put 
$$V_{S'}^{T}=\prod_{w \in T_{K}} U_{K_{w}}^{1} \times 
\prod_{w \not \in (S' \cup T)_{K}, } U_{K_{w}} \times 
\prod_{w \in (S')_{K}} V_{K_{w}},$$ 
and $W_{S'}=V_{S'}^{T}/J_{K,S'}^{T}$, $W_{S}=V_{S'}^{T}/J_{K,S}^{T}$. 
So we have $W_{S'}=\prod_{w \in S'_{K}} \Delta G_{w}$, and
$$W_{S}=
\prod_{w \in S_{K}} \Delta G_{w} \times 
\prod_{w \not \in (S' \setminus S)_{K}} W_{K_{w}}
=\prod_{w \in S_{K}} \Delta G_{w} \times 
\prod_{w \not \in (S' \setminus S)_{K}} \ZZ[G_{w}].$$
where we used the isomorphisms
$W_{K_{w}} \simeq \ZZ[G_{w}]$ for $w \in (S' \setminus S)_{K}$ 
which we defined in the previous paragraph
to get the second equality 
(note that primes in $S' \setminus S$ are unramified).
 
Let 
$$0 \longrightarrow C_{K} \longrightarrow \GO \longrightarrow 
\Delta G \longrightarrow 0$$
be the extension corresponding to the global fundamental class as 
in \cite{Grei4}, 
and consider the commutative diagram of exact sequences;
$$
\begin{array}{ccccccccc}
0 &         \longrightarrow  & J_{K,S'}^{T} & \longrightarrow &
V_{S'}^{T} & \longrightarrow  & W_{S'} & \longrightarrow & 0\\
&&\mapdown{}&&\mapdown{}&&\mapdown{}&& \\
0 &         \longrightarrow  & C_{K}  & \longrightarrow & 
\GO&       \longrightarrow & \Delta G & \longrightarrow &0. 
\end{array}
$$
The conditions (ii) and (iii) imply that the left and right  
vertical maps in the diagram are surjective (see also 
the exact sequence (\ref{e1})). 
Therefore, the central vertical map is also surjective 
(see \cite{Grei4} page 1409).
We next consider the commutative diagram 
$$
\begin{array}{ccccccccc}
0 &         \longrightarrow  & J_{K,S}^{T} & \longrightarrow &
V_{S'}^{T} & \longrightarrow  & W_{S} & \longrightarrow & 0\\
&&\mapdown{}&&\mapdown{}&&\mapdown{}&& \\
0 &         \longrightarrow  & C_{K}  & \longrightarrow & 
\GO&       \longrightarrow & \Delta G & \longrightarrow &0, 
\end{array}
$$
which is obtained by replacing $J_{K,S'}^{T}$ by $J_{K,S}^{T}$.
We put $A=\Ker(V_{S'}^{T} \longrightarrow \GO)$ and 
$W'_{S}=\Ker(W_{S} \longrightarrow \Delta G)$. 
By the exact sequence (\ref{e1}) and the snake lemma, 
we have an exact sequence 
\begin{equation} \label{e02}
0 \longrightarrow (\OO_{K,S}^{T})^{\times} \longrightarrow 
A \longrightarrow W'_{S} \longrightarrow 
Cl_{K,S}^{T} \longrightarrow 0.
\end{equation}

We put $\GB=\prod_{w \in (S')_{K}} \ZZ[G_w]$, and regard $W_{S}$ as a submodule 
of $\GB$. 
By definition $\GB/W_{S} \simeq \prod_{w \in S_{K}} \ZZ$.
The map $W_{S}  \longrightarrow   \Delta G$ can be extended 
to $\GB \longrightarrow   \ZZ[G]$.
Since $S$ is non-empty, this is surjective. 
We denote by $B$ the kernel of this homomorphism.
Now we have a commutative diagram of exact sequences:
$$
\begin{array}{ccccccccc}
&& 0        && 0        &&   0      && \\
&&\mapdown{}&&\mapdown{}&&\mapdown{}&& \\
0 &         \longrightarrow  & W'_{S} & \longrightarrow &
W_{S} & \longrightarrow  & \Delta G & \longrightarrow & 0\\
&&\mapdown{}&&\mapdown{}&&\mapdown{}&& \\
0 &         \longrightarrow  & B & \longrightarrow &
\GB & \longrightarrow  & \ZZ[G] & \longrightarrow & 0\\
&&\mapdown{}&&\mapdown{}&&\mapdown{}&& \\
0 &         \longrightarrow  & \GX_{K,S}  & \longrightarrow & 
\prod_{w \in S_{K}} \ZZ &       \longrightarrow & \ZZ & \longrightarrow &0\\
&&\mapdown{}&&\mapdown{}&&\mapdown{}&& \\ 
&& 0        && 0        &&   0      &&
\end{array}
$$
where $\GX_{K,S}$ is the kernel of the homomorphism 
$\prod_{w \in S_{K}} \ZZ \longrightarrow \ZZ$.

The map $A \longrightarrow W_{S}'$ obtained above defines 
a map $\psi_{S}: A \longrightarrow B$ by regarding $W_{S}'$ 
as a submodule of $B$.
We define ${\mathcal H}_{K,S}^{T}$ to be the cokernel of 
$\psi_{S}:  A \longrightarrow B$.
Thus we have obtained the following.

\begin{Proposition} \label{PropCalH}
The homomorphism $\psi_{S}: A \longrightarrow B$ obtained above has 
kernel $(\OO_{K,S}^{T})^{\times}$ and cokernel ${\mathcal H}_{K,S}^{T}$ 
for which, we have an exact sequence 
$$0 \longrightarrow Cl_{K,S}^{T} \longrightarrow 
{\mathcal H}_{K,S}^{T} \longrightarrow 
\GX_{K,S} \longrightarrow 0.$$
The module $B$ is a finitely generated free $\ZZ[G]$-module. 
\end{Proposition}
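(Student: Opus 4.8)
\noindent\emph{Proof strategy.}
The plan is to read off all three statements from the exact sequence (\ref{e02}) together with the large commutative diagram preceding the Proposition. The point to notice is that, by its very construction, $\psi_{S}$ is the composite $A\stackrel{f}{\longrightarrow}W'_{S}\hookrightarrow B$, where $f\colon A\to W'_{S}$ is the map occurring in (\ref{e02}) and $W'_{S}\hookrightarrow B$ is the left-hand vertical arrow of the first column $0\to W'_{S}\to B\to\GX_{K,S}\to 0$ of the diagram.

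Granting this, the first two assertions are formal. As $W'_{S}\hookrightarrow B$ is injective, $\Ker\psi_{S}=\Ker f=(\OO_{K,S}^{T})^{\times}$ by (\ref{e02}). For the cokernel, ${\mathcal H}_{K,S}^{T}=\Coker\psi_{S}=B/f(A)$; since $f(A)\subseteq W'_{S}\subseteq B$, the third isomorphism theorem yields a short exact sequence of $\ZZ[G]$-modules $0\to W'_{S}/f(A)\to B/f(A)\to B/W'_{S}\to 0$, and here $W'_{S}/f(A)=\Coker f=Cl_{K,S}^{T}$ by (\ref{e02}), while $B/W'_{S}=\GX_{K,S}$ by the same column. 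This is exactly the sequence in the statement.

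It remains to prove that $B$ is finitely generated and free over $\ZZ[G]$. First, $\GB$ itself is free: for each $v\in S'$ the factor $\prod_{w\mid v}\ZZ[G_{w}]$ of $\GB$ is isomorphic as a $\ZZ[G]$-module to $\ZZ[G]$ — the primes above $v$ form a single $G$-orbit with stabiliser $G_{v}$, and inducing $\ZZ[G_{v}]$ from $G_{v}$ up to $G$ gives $\ZZ[G]$ — so $\GB\cong\ZZ[G]^{\#S'}$. Since $\ZZ[G]$ is projective, the middle row $0\to B\to\GB\to\ZZ[G]\to 0$ of the diagram splits, which already gives that $B$ is finitely generated projective with $B\oplus\ZZ[G]\cong\ZZ[G]^{\#S'}$. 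To upgrade this to freeness I would use the explicit description of the local modules $W_{K_{w}}$ and of the map $W_{S}\to\Delta G$ (hence of $\GB\to\ZZ[G]$) recorded above: the aim is to produce a $\ZZ[G]$-basis of $\GB$ one element $e$ of which maps to a unit of $\ZZ[G]$, after which replacing $e$ by its inverse-unit multiple and subtracting suitable multiples of it from the remaining basis elements exhibits a $\ZZ[G]$-basis of $B$ of cardinality $\#S'-1$. Making the surjection $\GB\to\ZZ[G]$ concrete enough to carry this out — this is where the hypotheses (ii) and (iii) on $S'$ and the chosen normalisation of the local maps enter — is the one genuinely non-formal step; everything else is diagram chasing from (\ref{e02}) and the displayed diagram.
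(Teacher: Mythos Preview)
Your treatment of the kernel, the cokernel, and the short exact sequence for $\mathcal{H}_{K,S}^{T}$ is correct and is exactly the paper's (implicit) argument: the Proposition is introduced with ``Thus we have obtained the following'' and simply summarises the preceding construction, so your diagram chase from (\ref{e02}) and the left column $0\to W'_{S}\to B\to\GX_{K,S}\to 0$ just makes that summary explicit.

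On the freeness of $B$ you correctly flag the one non-formal point; the paper itself offers no proof beyond noting (immediately after the Proposition) that the middle row splits, giving $\GB\simeq B\oplus\ZZ[G]$. Two remarks on your sketch. First, hypotheses (ii) and (iii) on $S'$ are not what is at stake here --- they were used earlier to obtain the surjectivity of $V_{S'}^{T}\to\GO$, not to control the structure of $B$. Second, in the abelian setting the paper actually uses from \S\ref{CM-extensionsSubsection} onward, the explicit-basis manoeuvre is unnecessary: the splitting already exhibits $B$ as stably free over the commutative Noetherian ring $\ZZ[G]$ of Krull dimension $1$, and stably free modules of positive rank over such rings are free (Bass cancellation for rank $\geq 2$; a top-exterior-power computation for rank $1$). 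That said, your pivot-element idea is in the spirit of the paper's later normalisation $\psi_{F,S}((0,1))=e^{\GB}_{F,v_{\infty}}$, which amounts to choosing the extension $\GB\to\ZZ[G]$ so that one standard basis vector of $\GB$ maps to $1$.
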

  
\begin{Remark}
\begin{rm}
The module ${\mathcal H}_{K,S}^{T}$ is isomorphic to the module 
${\cal S}_{S,T}^{{\rm tr}}({\mathbb G}_{m/K})$ 
constructed in \cite{BKS1} Definition 2.6 
by Burns, Sano and the author.
This module is also regarded as the ``Weil \'{e}tale 
cohomology group $H_{T}^2(\OO_{K,S}, \ZZ(1))$". 
We note that the assumption $S \supset S_{{\rm ram}}(K/k)$ 
is important to get this Proposition.
\end{rm}
\end{Remark}

Note that the middle horizontal exact sequence in the diagram before 
Proposition \ref{PropCalH} splits. 
So we have an isomorphism $\GB \simeq B \oplus \ZZ[G]$. 
Therefore, putting $\GA=A \oplus \ZZ[G]$, we can define 
$\GA \longrightarrow \GB$ 
which is an extension of $A \longrightarrow B$, and 
whose kernel and cokernel coincide with 
the kernel and cokernel of $\psi_{S}: A \longrightarrow B$, respectively.
We denote this map also by $\psi_{S}: \GA \longrightarrow \GB$.

For any $\ZZ[G]$-module $M$, we denote the linear dual by 
$M^{\circ}=\Hom(M, \ZZ)$, and the Pontryagin dual 
by $M^{\vee}=\Hom(M,\QQ/\ZZ)$.
Taking the linear dual of $\psi_{S}: \GA \longrightarrow \GB$, 
we have 
$\psi_{S}^{\circ}: \GB^{\circ} \longrightarrow \GA^{\circ}$, 
whose cokernel we denote by 
${\cal S}_{K,S}^{T}$. 
Of course, this is isomorphic to the cokernel of 
$B^{\circ} \longrightarrow A^{\circ}$.

\begin{Proposition} \label{PropCalS}
The kernel of $\psi_{S}^{\circ}: \GB^{\circ} \longrightarrow \GA^{\circ}$ 
is isomorphic to $\GX_{K,S}^{\circ}$, and the cokernel 
${\cal S}_{K,S}^{T}$ sits in an exact sequence
$$0 \longrightarrow (Cl_{K,S}^{T})^{\vee} \longrightarrow 
{\mathcal S}_{K,S}^{T} \longrightarrow 
((\OO_{K,S}^{T})^{\times})^{\circ} \longrightarrow 0.$$
\end{Proposition}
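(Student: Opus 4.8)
The plan is to feed the four-term exact sequence of Proposition \ref{PropCalH}, namely
$$0 \longrightarrow (\OO_{K,S}^{T})^{\times} \longrightarrow \GA \stackrel{\psi_{S}}{\longrightarrow} \GB \longrightarrow {\mathcal H}_{K,S}^{T} \longrightarrow 0,$$
through the functor $\Hom(-,\ZZ)$ and to keep careful track of the relevant $\mathrm{Ext}^{1}_{\ZZ}$-terms. Setting $C := \image(\psi_{S}) \subseteq \GB$, this sequence breaks into
$$0 \to (\OO_{K,S}^{T})^{\times} \to \GA \stackrel{\pi}{\longrightarrow} C \to 0 \qquad\text{and}\qquad 0 \to C \hookrightarrow \GB \to {\mathcal H}_{K,S}^{T} \to 0.$$
Since $\GB$ is a finitely generated free $\ZZ[G]$-module, it is in particular a finitely generated free abelian group, hence so is its subgroup $C$; thus $\mathrm{Ext}^{1}_{\ZZ}(\GB,\ZZ)=\mathrm{Ext}^{1}_{\ZZ}(C,\ZZ)=0$.

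Applying $\Hom(-,\ZZ)$ to the second short exact sequence would give the exact sequence $0 \to ({\mathcal H}_{K,S}^{T})^{\circ} \to \GB^{\circ} \to C^{\circ} \to \mathrm{Ext}^{1}_{\ZZ}({\mathcal H}_{K,S}^{T},\ZZ) \to 0$. Using the exact sequence $0 \to Cl_{K,S}^{T} \to {\mathcal H}_{K,S}^{T} \to \GX_{K,S} \to 0$ of Proposition \ref{PropCalH}, the finiteness of $Cl_{K,S}^{T}$ (so that $(Cl_{K,S}^{T})^{\circ}=0$ and $\mathrm{Ext}^{1}_{\ZZ}(Cl_{K,S}^{T},\ZZ)\cong (Cl_{K,S}^{T})^{\vee}$), and the $\ZZ$-freeness of $\GX_{K,S}\subseteq\prod_{w\in S_{K}}\ZZ$, I would deduce $({\mathcal H}_{K,S}^{T})^{\circ}\cong \GX_{K,S}^{\circ}$ and $\mathrm{Ext}^{1}_{\ZZ}({\mathcal H}_{K,S}^{T},\ZZ)\cong (Cl_{K,S}^{T})^{\vee}$, whence
$$0 \to \GX_{K,S}^{\circ} \to \GB^{\circ} \to C^{\circ} \to (Cl_{K,S}^{T})^{\vee} \to 0. \qquad (\star)$$
Similarly, applying $\Hom(-,\ZZ)$ to the first short exact sequence and using $\mathrm{Ext}^{1}_{\ZZ}(C,\ZZ)=0$ produces the short exact sequence
$$0 \to C^{\circ} \stackrel{\pi^{\circ}}{\longrightarrow} \GA^{\circ} \to ((\OO_{K,S}^{T})^{\times})^{\circ} \to 0. \qquad (\star\star)$$

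It then remains to assemble these. Because $\psi_{S}$ factors as $\GA \stackrel{\pi}{\twoheadrightarrow} C \hookrightarrow \GB$, the dual $\psi_{S}^{\circ}\colon \GB^{\circ}\to\GA^{\circ}$ factors as $\GB^{\circ}\to C^{\circ}\stackrel{\pi^{\circ}}{\longrightarrow}\GA^{\circ}$, and since $\pi^{\circ}$ is injective by $(\star\star)$ we get $\Ker(\psi_{S}^{\circ})=\Ker(\GB^{\circ}\to C^{\circ})\cong\GX_{K,S}^{\circ}$ from $(\star)$. For the cokernel, put $C':=\image(\GB^{\circ}\to C^{\circ})$, so that $C^{\circ}/C'\cong(Cl_{K,S}^{T})^{\vee}$ by $(\star)$ and $\image(\psi_{S}^{\circ})=\pi^{\circ}(C')$. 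The chain of submodules $\pi^{\circ}(C')\subseteq\pi^{\circ}(C^{\circ})\subseteq\GA^{\circ}$ yields the short exact sequence
$$0 \to \pi^{\circ}(C^{\circ})/\pi^{\circ}(C') \to \GA^{\circ}/\pi^{\circ}(C') \to \GA^{\circ}/\pi^{\circ}(C^{\circ}) \to 0,$$
that is, $0 \to (Cl_{K,S}^{T})^{\vee} \to {\mathcal S}_{K,S}^{T} \to ((\OO_{K,S}^{T})^{\times})^{\circ} \to 0$, where the outer terms are identified via $\pi^{\circ}$ injective and via $(\star\star)$ respectively. This is precisely the asserted exact sequence.

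The argument is essentially a formal diagram chase and I do not anticipate a serious obstacle; the one delicate point is the repeated bookkeeping of the vanishing of $\mathrm{Ext}^{1}_{\ZZ}$-terms — one must invoke the $\ZZ$-freeness of $\GB$, of $C$, and of $\GX_{K,S}$, the finiteness of $Cl_{K,S}^{T}$, and the standard identification $\mathrm{Ext}^{1}_{\ZZ}(\text{finite group},\ZZ)\cong(\text{that group})^{\vee}$ — together with the reminder that $\Hom(-,\ZZ)$ is only left exact, so the map $\GB^{\circ}\to C^{\circ}$ is in general \emph{not} surjective; indeed its failure of surjectivity is exactly what contributes the submodule $(Cl_{K,S}^{T})^{\vee}$ of ${\mathcal S}_{K,S}^{T}$.
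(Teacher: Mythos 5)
Your proof is correct and takes essentially the same route as the paper: both introduce $M:=\image(\psi_S)$ (which you call $C$), dualize the two resulting short exact sequences, and use $\mathrm{Ext}^1_\ZZ(M,\ZZ)=0$ together with $({\mathcal H}_{K,S}^T)^\circ\cong\GX_{K,S}^\circ$ and $\mathrm{Ext}^1_\ZZ({\mathcal H}_{K,S}^T,\ZZ)\cong(Cl_{K,S}^T)^\vee$. The paper is simply more terse about the final assembly step, which you spell out via the chain $\pi^\circ(C')\subseteq\pi^\circ(C^\circ)\subseteq\GA^\circ$.
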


This module ${\mathcal S}_{K,S}^{T}$ is isomorphic to 
the module ${\mathcal S}_{S, T}({\mathbb G}_{m/K})$ 
in \cite{BKS1} Definition 2.1.
One can regard $0 \longrightarrow \GX_{K,S}^{\circ} 
\longrightarrow
\GB^{\circ} \longrightarrow \GA^{\circ} 
\longrightarrow
{\cal S}_{K,S}^{T} \longrightarrow 0$
as a (linear dual version of) Tate sequence.

\begin{proof}
We denote by $M$ the image of $\psi_{S}: \GA  \longrightarrow \GB$.
Then $0 \longrightarrow M^{\circ} \longrightarrow 
\GA^{\circ} \longrightarrow ((\OO_{K,S}^{T})^{\times})^{\circ}
\longrightarrow 0$ and 
$$0 \longrightarrow \GX_{K,S}^{\circ} \longrightarrow 
\GB^{\circ} \longrightarrow M^{\circ}
\longrightarrow 
{\rm Ext}^1({\mathcal H}_{K,S}^{T}, \ZZ)=(Cl_{K,S}^{T})^{\vee}
\longrightarrow 0$$
are both exact since the torsion part of ${\mathcal H}_{K,S}^{T}$ is
$Cl_{K,S}^{T}$ and the quotient ${\mathcal H}_{K,S}^{T}/Cl_{K,S}^{T}$
is isomorphic to $\GX_{K,S}$.
Thus $\psi_{S}^{\circ}$ has kernel isomorphic to $\GX_{K,S}^{\circ}$.
Also, concerning the cokernel ${\cal S}_{K,S}^{T}$, we get 
the exact sequence in Proposition \ref{PropCalS} from 
the above two exact sequences.
\end{proof}

\subsection{A homomorphism $\psi$} \label{CM-extensionsSubsection}

From now on we assume that $K/k$ is a finite abelian extension 
such that 
$k$ is totally real and $K$ is a CM-field as in \S \ref{Intro}.  
We will define a homomorphism $\psi^{\circ}: 
(W_{S_{\infty}}^{\circ})^-_{\ZZ_{p}} 
\longrightarrow 
(\GA^{\circ}_{\ZZ_{p}})^-$ and study it. 
 
We consider $J_{K, S_{\infty}}^{T}$, and get an exact sequence 
\begin{equation} \label{e11}
0 \longrightarrow (\OO_{K}^{T})^{\times} \longrightarrow 
J_{K,S_{\infty}}^{T} \longrightarrow C_{K} \longrightarrow 
Cl_{K}^{T} \longrightarrow 0
\end{equation}
from definitions.
We define $W_{S_\infty}=V_{S'}^{T}/J_{K,S_{\infty}}^{T}$, so
$$W_{S_\infty}=
\prod_{w \in (S' \setminus S_{\infty})_{K}} W_{K_{w}} \times 
\prod_{w \in (S_{\infty})_{K}} \Delta G_{w}.$$
From the commutative diagram
$$
\begin{array}{ccccccccc}
0 &         \longrightarrow  & J_{K,S_{\infty}}^{T} & \longrightarrow &
V_{S'}^{T} & \longrightarrow  & W_{S_{\infty}} & \longrightarrow & 0\\
&&\mapdown{}&&\mapdown{}&&\mapdown{}&& \\
0 &         \longrightarrow  & C_{K}  & \longrightarrow & 
\GO&       \longrightarrow & \Delta G & \longrightarrow &0, 
\end{array}
$$
defining 
$W_{S_{\infty}}'=\Ker(W_{S_{\infty}} \longrightarrow \Delta G)$,
we have an exact sequence 
\begin{equation} \label{e22}
0 \longrightarrow (\OO_{K}^{T})^{\times} \longrightarrow 
A \longrightarrow W'_{S_{\infty}} \longrightarrow 
Cl_{K}^{T} \longrightarrow 0
\end{equation}
as we got the exact sequence (\ref{e02}) in the previous subsection.

Now we take an odd prime number $p$, and study the $p$-components 
of the above modules.  
For any $\ZZ[G]$-module $M$ we write $M_{\ZZ_{p}}=M \otimes \ZZ_{p}$
and denote by $M^{-}_{\ZZ_{p}}$ the minus part of $M_{\ZZ_{p}}$ 
(which consists of elements on which the complex conjugation acts as $-1$). 

Since $(\Delta G)^-_{\ZZ_{p}}=\ZZ_{p}[G]^-$, the sequence 
$0 \longrightarrow (W'_{S_{\infty}})^-_{\ZZ_{p}} 
\longrightarrow (W_{S_{\infty}})^-_{\ZZ_{p}} \longrightarrow 
(\Delta G)^-_{\ZZ_{p}} \longrightarrow 0$ splits 
as an exact sequence of $\ZZ_{p}[G]^-$-modules. 
Therefore, putting $\GA=A \oplus \ZZ[G]$ as in the previous 
subsection, 
we can construct a map 
$$\psi: \GA^-_{\ZZ_{p}} \longrightarrow (W_{S_{\infty}})^-_{\ZZ_{p}}$$ 
which is an extension of $A^-_{\ZZ_{p}} \longrightarrow 
(W_{S_{\infty}}')_{\ZZ_{p}}^-$, 
whose kernel is $((\OO_{K}^{T})^{\times}_{\ZZ_{p}})^-$, and 
whose cokernel is $((Cl_{K}^{T})_{\ZZ_{p}})^-$.
Since $(\OO_{K}^{T})^{\times}$ is torsion free, 
we have $((\OO_{K}^{T})^{\times}_{\ZZ_{p}})^-=0$. 
Therefore, we have an exact sequence 
\begin{equation} \label{e23}
0 \longrightarrow \GA^-_{\ZZ_{p}} 
\stackrel{\psi}{\longrightarrow} 
(W_{S_{\infty}})^-_{\ZZ_{p}} \longrightarrow 
((Cl_{K}^{T})_{\ZZ_{p}})^- \longrightarrow 0.
\end{equation}

Taking the linear dual of the exact sequence (\ref{e23}), 
we obtain 
$$
0 \longrightarrow (W_{S_{\infty}}^{\circ})^-_{\ZZ_{p}} 
\stackrel{\psi^{\circ}}{\longrightarrow} (\GA^{\circ})^-_{\ZZ_{p}}
\longrightarrow 
((Cl_{K}^{T})^{\vee}_{\ZZ_{p}})^- \longrightarrow 0
$$
because ${\rm Ext}^1_{\ZZ_{p}}
((Cl_{K}^{T})_{\ZZ_{p}})^-, \ZZ_{p})=
((Cl_{K}^{T})^{\vee}_{\ZZ_{p}})^-$.

\vspace{2mm}

For an infinite prime $v \in S_{\infty}$ we consider 
$\Delta_{v}=\bigoplus_{w \mid v} \Delta G_{w}$. 
Here and from now on, 
we use the notation $\bigoplus$ instead of $\prod$.
Since the complex conjugation $\rho$ 
acts as $-1$ on $\Delta G_{w}$, 
$(\Delta_{v})^-_{\ZZ_{p}}=
(\bigoplus_{w \mid v} \Delta G_{w})^-_{\ZZ_{p}}$ is 
a free $\ZZ_{p}[G]^-$-module of rank $1$.
Choosing a prime $w$ above $v$ and taking $e_{v} \in 
(\Delta_{v})^-_{\ZZ_{p}}$ 
whose $w$-component is $\frac{1-\rho}{2}$ and other components 
are zero where $\rho$ is the complex conjugation, 
we have an equality 
$(\Delta_{v})^-_{\ZZ_{p}} = \ZZ_{p}[G]^-e_{v}$.

For a finite prime $v$ in $S'$, we put 
$W_{v}=\bigoplus_{w \mid v} W_{K_{w}}$.
For $w \vert v$, by 
the description of $W_{K_{w}}$ mentioned in the previous subsection, 
we can show that 
$W_{K_{w}}^{\circ}$ 
is isomorphic to the quotient of 
$\ZZ[G_{w}]/(N_{G_{w}}) \oplus \ZZ[G_{w}/I_{w}]$ by 
the submodule generated by 
$(N_{I_{w}} x, ({\mathcal F}_{w}-1)(x))$ 
for all $x \in \ZZ[G/I_{w}]$ (see (24) on page 1412 in \cite{Grei4}).
In this way we regard $W_{K_{w}}^{\circ}$ as a quotient of 
$\ZZ[G_{w}] \oplus \ZZ[G_{w}]$.
The natural map 
$\ZZ[G_{w}] \oplus \ZZ[G_{w}] \longrightarrow 
W_{K_{w}}^{\circ}$ induces  
$c_{w}: \QQ[G_{w}] \oplus \QQ[G_{w}] \longrightarrow 
W_{K_{w}}^{\circ} \otimes \QQ$. 
By Greither \cite{Grei4} Lemma 6.1 
$c_{w}((1,1))$ is a basis of 
$W_{K_{w}}^{\circ} \otimes \QQ$; 
\begin{equation} \label{BasisIsom1}
\QQ[G_{w}]c_{w}((1,1)) =
W_{K_{w}}^{\circ} \otimes \QQ.
\end{equation}
Since we slightly modified the homomorphism used in the 
definition of $W_{K_{w}}$ as we mentioned in the previous subsection, 
we give a proof of (\ref{BasisIsom1}). 
Since $G$ is abelian, ${\mathcal F}_{w}$ and $I_{w}$ are 
independent of the choice of $w$ above $v$, so 
we write $I_{v}$ and ${\mathcal F}_{v}$ for them.
Put 
\begin{equation} \label{gv}
g_{v}=1-{\mathcal F}_{v}+\# I_{v}. 
\end{equation}
This is a nonzero divisor in $\QQ[G_{w}]$.
Since 
$$(0, -g_{v})=(N_{I_{v}}, {\mathcal F}_{v}-1)-(N_{I_{v}}, \# I_{v}),$$
$c_{w}((N_{I_{v}}, {\mathcal F}_{v}-1))=0$ and 
$c_{w}((N_{I_{v}}, N_{I_{v}}))=c_{w}((N_{I_{v}}, \# I_{v}))$, 
we have 
\begin{equation} \label{BasisIsom2}
c_{w}((0,1))=g_{v}^{-1}N_{I_{v}}c_{w}((1,1))
\end{equation}
in $W_{K_{w}}^{\circ} \otimes \QQ$. 
This shows that both $c_{w}((0,1))$ and $c_{w}((1,0))$ 
are in the space generated by $c_{w}((1,1))$ and 
we get $\QQ[G_{w}]c_{w}((1,1)) =
W_{K_{w}}^{\circ} \otimes \QQ$.

\vspace{5mm}

Thus, by fixing $w$ above $v$ and using $c_{w}$, 
we have an isomorphism $\QQ[G] \simeq \bigoplus_{w \mid v} \QQ[G_{w}]$
and a homomorphism
$$c_{v}: \QQ[G] \oplus \QQ[G] \longrightarrow 
\bigoplus_{w \mid v} W_{K_{w}}^{\circ} \otimes \QQ 
=W_{v}^{\circ} \otimes \QQ.$$
We define 
\begin{equation} \label{BasisIsom3}
e_{v}=c_{v}((1,1)) \in W_{v}^{\circ} \otimes \QQ,
\end{equation}
which is a basis of $W_{v}^{\circ} \otimes \QQ$.

In this way we get a basis $(e_{v})_{v \in S'}$ 
of a free $\QQ[G]$-module 
$W_{S_{\infty}}^{\circ} \otimes \QQ$ of rank $\#S'$. 

For a finite prime $v \in S'$ 
we consider the equality (\ref{BasisIsom1}). 
Since $W_{K_{w}}^{\circ}$ is generated by 
$c_{w}((1,1))$ and $c_{w}((0,1))$, 
using (\ref{BasisIsom2}), we have 
$$W_{K_{w}}^{\circ}=(1, \frac{1}{g_{v}}N_{I_{v}})\ZZ[G_{w}]
c_{w}((1,1)).$$
Therefore, we get 
$$W_{v}^{\circ}=(1, \frac{1}{g_{v}}N_{I_{v}})\ZZ[G]e_{v}$$ 
where
$g_{v}=1-{\mathcal F}_{v}+\# I_{v}$ as in $(\ref{gv})$. 

Put 
\begin{equation} \label{hv}
h_{v}=(1-\frac{N_{I_{v}}}{\#I_{v}})
+\frac{N_{I_{v}}}{\#I_{v}}g_{v}
\end{equation}
which is a nonzero divisor of $\QQ[G]$ 
as in Lemma 8.3 in Greither \cite{Grei4}. 
Then by this lemma we have
$$
(1, \frac{1}{g_{v}}N_{I_{v}})\ZZ[G]=h_{v}^{-1}
(N_{I_{v}}, 1-\frac{N_{I_{v}}}{\#I_{v}} {\mathcal F}_{v})
\ZZ[G]
$$
because 
$h_{v} = 1-\frac{N_{I_{v}}}{\#I_{v}} {\mathcal F}_{v} + N_{I_{v}}$.
Therefore, we have  
\begin{equation} \label{BasisIsom4}
W_{v}^{\circ} = (1, \frac{1}{g_{v}}N_{I_{v}})\ZZ[G]e_{v}
=h_{v}^{-1}
(N_{I_{v}}, 1-\frac{N_{I_{v}}}{\#I_{v}} {\mathcal F}_{v})e_{v}
\end{equation}
and an isomorphism
$$
W_{v}^{\circ} \simeq 
(N_{I_{v}}, 1-\frac{N_{I_{v}}}{\#I_{v}} {\mathcal F}_{v})
\ZZ[G].
$$

We note that if $v$ is unramified, 
$(N_{I_{v}}, 1-\frac{N_{I_{v}}}{\#I_{v}} {\mathcal F}_{v}) \ZZ[G]
=(1, 1-{\mathcal F}_{v}) \ZZ[G]=\ZZ[G]$. 
Therefore, recalling that $S_{{\rm ram}}(K/k)$ is the set of 
finite primes that are ramified in $K$, we have an isomorphism
$$\bigoplus_{v \in S' \setminus S_{\infty}} 
W_{v}^{\circ} \simeq 
\bigoplus_{v \in S_{{\rm ram}}(K/k)} 
(N_{I_{v}}, 1-\frac{N_{I_{v}}}{\#I_{v}} {\mathcal F}_{v}) \ZZ[G] 
\oplus
\bigoplus_{v \in S' \setminus (S_{\infty} \cup S_{{\rm ram}}(K/k))} 
\ZZ[G].$$

Thus we get information of the Galois module structure of 
$W^{\circ}_{S_{\infty}}$.
We have obtained  
\begin{Proposition} \label{P2}
\begin{equation} \label{e4}
0 \longrightarrow  (W_{S_{\infty}}^{\circ})^-_{\ZZ_{p}} 
\stackrel{\psi^{\circ}}{\longrightarrow} 
(\GA^{\circ}_{\ZZ_{p}})^- 
\longrightarrow ((Cl_{K}^{T})^{\vee}_{\ZZ_{p}})^- \longrightarrow 0
\end{equation}
is exact. 
Here, $(\GA^{\circ}_{\ZZ_{p}})^-$ 
is a free $\ZZ_{p}[G]^-$-module of 
rank $\#S'$, and 
\begin{equation} \label{I3}
(W_{S_{\infty}}^{\circ})^-_{\ZZ_{p}}  
\simeq 
\bigoplus_{v \in S' \setminus S_{{\rm ram}}(K/k)} 
\ZZ_{p}[G]^- \oplus 
\bigoplus_{v \in S_{{\rm ram}}(K/k)} 
(N_{I_{v}}, 1-\frac{N_{I_{v}}}{\#I_{v}} {\mathcal F}_{v})
\ZZ_{p}[G]^-.
\end{equation}
\end{Proposition}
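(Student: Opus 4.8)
The plan is to read off the three assertions from the constructions of \S\ref{CM-extensionsSubsection}, the only substantial input being the cohomological triviality of $A$. The exact sequence (\ref{e4}) is the $\ZZ$-linear dual of (\ref{e23}) and has essentially been written down already; one only has to identify the cokernel. Both $\GA^-_{\ZZ_p}$ and $(W_{S_\infty})^-_{\ZZ_p}$ are finitely generated and $\ZZ_p$-free (the latter because $W_{S_\infty}$ is $\ZZ$-free, being a submodule of a product of copies of $\Delta G_w$ and $\ZZ[G_w/I_w]$, and the former because it embeds in the latter via $\psi$), so applying $\Hom_{\ZZ_p}(-,\ZZ_p)$ to (\ref{e23}) yields a four-term exact sequence ending in ${\rm Ext}^1_{\ZZ_p}(((Cl_K^T)_{\ZZ_p})^-,\ZZ_p)$; since $((Cl_K^T)_{\ZZ_p})^-$ is finite, this last group is canonically $((Cl_K^T)^\vee_{\ZZ_p})^-$ and $\Hom_{\ZZ_p}(((Cl_K^T)_{\ZZ_p})^-,\ZZ_p)=0$, which gives (\ref{e4}).

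For the freeness of $(\GA^\circ_{\ZZ_p})^-$, write $\GA = A\oplus\ZZ[G]$; since $\ZZ[G]^\circ\simeq\ZZ[G]$ and, for odd $p$, the $\ZZ_p$-linear dual of a free $\ZZ_p[G]^-$-module is free of the same rank, it suffices to prove that $A^-_{\ZZ_p}$ is free of rank $\#S'-1$ over $\ZZ_p[G]^-$. The key point is that $A = \Ker(V_{S'}^T\twoheadrightarrow\GO)$ is a cohomologically trivial $\ZZ[G]$-module: $V_{S'}^T$ is cohomologically trivial because it is built from the local fundamental-class modules $V_{K_w}$, which are cohomologically trivial by local class field theory, and from unit groups at the primes outside $S'$, which are unramified in $K$ and hence also cohomologically trivial; and $\GO$ is cohomologically trivial by Tate--Nakayama duality, cup product with the global fundamental class being an isomorphism on Tate cohomology. (This is why $[\GA\to\GB]$ is a perfect complex.) Hence $A^-_{\ZZ_p}$ is a finitely generated, $\ZZ_p$-free, cohomologically trivial $\ZZ_p[G]^-$-module, so it is projective over $\ZZ_p[G]^-$; since $\ZZ_p[G]^-$ is a commutative semilocal ring, a finitely generated projective module of constant rank over it is free, so it remains to pin down the rank.

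For the rank, tensor (\ref{e22}) with $\QQ$ and pass to minus parts: the terms $(\OO_K^T)^\times\otimes\QQ$ and $Cl_K^T\otimes\QQ$ drop out (the minus part of the unit group of a CM-field is finite, and $Cl_K^T$ is finite), so $(A\otimes\QQ)^-\simeq(W'_{S_\infty}\otimes\QQ)^-$. From $0\to W'_{S_\infty}\to W_{S_\infty}\to\Delta G\to 0$, the fact (proved above) that $W_{S_\infty}^\circ\otimes\QQ$, hence $W_{S_\infty}\otimes\QQ$, is $\QQ[G]$-free of rank $\#S'$, and the equality $(\Delta G\otimes\QQ)^- = \QQ[G]^-$, one obtains over the semisimple ring $\QQ[G]^-$ a split exact sequence exhibiting $(W'_{S_\infty}\otimes\QQ)^-$ as $\QQ[G]^-$-free of rank $\#S'-1$. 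Thus $A^-_{\ZZ_p}$ has constant rank $\#S'-1$ on $\Spec\ZZ_p[G]^-$, hence is free of rank $\#S'-1$, and $(\GA^\circ_{\ZZ_p})^-$ is free of rank $\#S'$.

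Finally, (\ref{I3}) is an assembly of the local computations already done. Decompose $W_{S_\infty}^\circ = \bigoplus_{v\in S'\setminus S_\infty}W_v^\circ\oplus\bigoplus_{v\in S_\infty}\Delta_v^\circ$ and use $(M^\circ)^-_{\ZZ_p}\simeq(M^-_{\ZZ_p})^\circ$ for odd $p$: for $v\in S_\infty$, $(\Delta_v)^-_{\ZZ_p}=\ZZ_p[G]^-e_v$ is free of rank $1$, hence so is its dual; for a finite $v\in S'$ unramified in $K/k$, $W_v^\circ\simeq\ZZ[G]$, so $(W_v^\circ)^-_{\ZZ_p}\simeq\ZZ_p[G]^-$; and for $v\in S_{{\rm ram}}(K/k)$, the isomorphism (\ref{BasisIsom4}) gives $(W_v^\circ)^-_{\ZZ_p}\simeq(N_{I_v},1-\tfrac{N_{I_v}}{\#I_v}{\mathcal F}_v)\ZZ_p[G]^-$. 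Since every finite prime of $S'$ is either ramified in $K$ --- and then lies in $S_{{\rm ram}}(K/k)\subseteq S\subseteq S'$ --- or unramified, while $S'\setminus S_{{\rm ram}}(K/k)$ is exactly the infinite primes together with the unramified finite primes of $S'$, collecting these contributions gives (\ref{I3}). The only genuinely non-formal step is the freeness of $(\GA^\circ_{\ZZ_p})^-$: one needs the cohomological triviality of $A$ in hand, and one must be slightly careful that projectivity of constant rank really upgrades to freeness over $\ZZ_p[G]^-$, which is in general neither connected nor reduced --- hence the need to check the rank on every component via the computation over $\QQ[G]^-$.
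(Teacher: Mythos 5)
Your proposal is correct and follows essentially the same route as the paper: (\ref{e4}) by applying $\Hom_{\ZZ_p}(-,\ZZ_p)$ to (\ref{e23}) and identifying the $\mathrm{Ext}^1$ term, freeness of $(\GA^\circ_{\ZZ_p})^-$ from cohomological triviality of $A$ plus a rank count over $\QQ_p[G]^-$, and (\ref{I3}) by assembling the local computations of $W_v^\circ$. You simply spell out several steps the paper leaves implicit (why $V_{S'}^T$ and $\GO$ are cohomologically trivial, and that constant-rank projectives over the semilocal ring $\ZZ_p[G]^-$ are free), which is harmless detail rather than a different argument.
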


\begin{proof}
The exactness of the sequence (\ref{e4}) 
and the isomorphism (\ref{I3})
were already proved before this proposition. 
Since $A$ is torsion free and cohomologically trivial, 
$\GA^-_{\ZZ_{p}}$ is also cohomologically trivial. 
Note that $\GA^-_{\ZZ_{p}} \otimes_{\ZZ_{p}} \QQ_{p}$ is isomorphic to 
$(W^{\circ})^-_{\ZZ_{p}} \otimes_{\ZZ_{p}} \QQ_{p}$ which is 
free of rank $\#S'$ 
over $\QQ_{p}[G]^-$. 
So $\GA^-_{\ZZ_{p}}$ is 
a free $\ZZ_{p}[G]^-$-module of rank $\#S'$.
\end{proof}

\section{Fitting ideals}

\subsection{Stickelberger ideals and a conjecture
on Fitting ideals} \label{MainConjecture}

Let $K/k$ be a finite abelian CM-extension, 
and $G$, $T$,...be as in 
\S \ref{CM-extensionsSubsection}.    
We will first define a certain Stickelberger ideal 
$\Theta^{T}(K) \subset \ZZ[G]$.
 
For a character $\chi$ of $G$, 
we write $L(s, \chi)$ for the primitive $L$-function for $\chi$; 
this function omits exactly the Euler factors of primes dividing the 
conductor of $\chi$. 
We define 
$$\omega^{T}=\sum_{\chi \in {\hat G}} L_{T}(0,\chi^{-1}) \epsilon_{\chi} 
\in \QQ[G]$$
where 
$$L_{T}(s,\chi)=(\prod_{v \in T}(1-\chi({\mathcal F}_{v})N(v)^{1-s}))
L(s, \chi)$$ 
is the $T$-modified $L$-function 
and $\epsilon_{\chi}
=(\# G)^{-1}\sum_{\sigma \in G} \chi(\sigma) \sigma^{-1}$ 
is the idempotent of the $\chi$-component. 
We know that $\omega^{T} \in \QQ[G]$.

As in the previous section 
we denote by $S_{{\rm ram}}(K/k)$ the set of 
all ramifying finite primes in $K/k$.  
For $v \in S_{{\rm ram}}(K/k)$ 
we define a $\ZZ[G]$-module $U_{v}$ in $\QQ[G]$ by 
$$U_{v}=(N_{I_{v}}, 1-\frac{N_{I_{v}}}{\#I_{v}} {\mathcal F}_{v}^{-1})\ZZ[G] \subset 
\QQ[G].$$
We define the Stickelberger ideal $\Theta^{T}(K)$ by 
$$\Theta^{T}(K)=(\prod_{v \in S_{{\rm ram}}(K/k)} U_{v}) \omega^{T}.$$

\begin{Proposition} \label{P3}
This Stickelberger ideal $\Theta^{T}(K)$ is in $\ZZ[G]$, namely 
it is an ideal of $\ZZ[G]$.
\end{Proposition}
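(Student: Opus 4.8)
The plan is to reduce the claim to a local statement at each ramified prime $v$, and then verify the local integrality by a direct computation using the explicit generators of $U_v$ together with known integrality properties of the Stickelberger element $\omega^T$. First I would recall that, by the $T$-modified version of Deligne--Ribet / Cassou-Nogu\`es (or by the fact that $\omega^T$ represents the determinant of $R\Gamma_T$), the element $\omega^T$ lies in $\ZZ[G]$; this is the classical input and I would simply cite it. So the only issue is that $\Theta^T(K)=(\prod_{v\in S_{{\rm ram}}(K/k)}U_v)\omega^T$ multiplies $\omega^T$ by the product of the fractional ideals $U_v$, and we must see that no denominators are introduced. Since the $U_v$ for distinct $v$ commute and $\ZZ[G]$ is their common overring, it suffices to show $U_v\,\omega^T\subset\ZZ[G]$ for each single $v\in S_{{\rm ram}}(K/k)$, and in fact it is enough to treat one generator at a time: $N_{I_v}\omega^T\in\ZZ[G]$ and $(1-\frac{N_{I_v}}{\#I_v}\mathcal F_v^{-1})\omega^T\in\ZZ[G]$.

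For the first generator, the point is the standard fact that $N_{I_v}\omega^T$ is, up to the contribution of the Euler factor at $v$, the Stickelberger element of the subfield $K^{I_v}$ in which $v$ is unramified; more precisely, for a character $\chi$ the coefficient $L_T(0,\chi^{-1})\epsilon_\chi$ is killed by $N_{I_v}$ unless $\chi$ is trivial on $I_v$, and on those components $N_{I_v}\omega^T$ agrees with the inflation of a genuine integral Stickelberger element, hence lies in $\ZZ[G]$. For the second generator I would argue idempotent-by-idempotent: on the part where $\chi|_{I_v}\neq 1$ the factor $\frac{N_{I_v}}{\#I_v}$ annihilates, so the operator acts as the identity and integrality is inherited from $\omega^T\in\ZZ[G]$; on the part where $\chi|_{I_v}=1$, one uses that the primitive $L$-value $L(0,\chi^{-1})$ is missing the Euler factor at $v$ and that $L_T$ reinstates the $T$-factors, so that $(1-\chi(\mathcal F_v)^{-1})L_T(0,\chi^{-1})\epsilon_\chi$ — which is exactly what the generator produces on this component — is the corresponding coefficient of a Stickelberger element with $v$ added to the defining set $S$, again integral by Deligne--Ribet. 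Assembling these component computations gives $(1-\frac{N_{I_v}}{\#I_v}\mathcal F_v^{-1})\omega^T\in\ZZ[G]$.

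The main obstacle I anticipate is bookkeeping rather than conceptual: one must be careful about the precise normalization of the $T$-factors and about the interplay between the ``primitive'' $L$-function $L(s,\chi)$ (which omits the Euler factors at the conductor of $\chi$, in particular at ramified $v$ when $\chi$ is ramified there) and the behaviour of the idempotents under multiplication by $N_{I_v}$ and $\mathcal F_v$. In particular one should check that the Frobenius $\mathcal F_v$ appearing in $U_v$ is well-defined on the relevant components (it is, modulo $I_v$, which is exactly where $\frac{N_{I_v}}{\#I_v}$ is supported) and that the inverse $\mathcal F_v^{-1}$ in the definition of $U_v$ matches the $\mathcal F_v$ occurring in the Euler factors of $L_T$. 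Once these normalizations are pinned down, the integrality on each $\chi$-component is immediate, and summing over $\chi$ — equivalently, intersecting $\ZZ[G]\otimes\QQ$ with $\prod_\chi\ZZ_{(\ell)}[G]$ at all primes $\ell$ — yields $\Theta^T(K)\subset\ZZ[G]$.
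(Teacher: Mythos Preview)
There is a genuine gap at the starting point: the claim that $\omega^T \in \ZZ[G]$ is not correct in general, and this is not what Deligne--Ribet and Cassou-Nogu\`es prove. Their result gives integrality of $\theta_{F,S}^T$ only when $S$ contains \emph{all} primes ramified in $F/k$; by contrast $\omega^T$ is built from \emph{primitive} $L$-values, so the set of omitted Euler factors varies with $\chi$, and the resulting element typically has denominators dividing $\prod_{v \in S_{{\rm ram}}} \# I_v$. (The paper itself only asserts $\omega^T \in \QQ[G]$; the factors $U_v$ are there precisely to clear these denominators.) This also invalidates your reduction to a single $v$: even granting $U_v\,\omega^T \subset \ZZ[G]$ for each $v$ separately, it does not follow that $(\prod_v U_v)\,\omega^T \subset \ZZ[G]$, since the fractional ideal $U_{v'}$ for $v' \neq v$ does not preserve $\ZZ[G]$.

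What one must do is treat all the $U_v$ simultaneously. The generators of $\Theta^T(K)$ are the products
\[
\prod_{v \in J} N_{I_v} \cdot \prod_{v \in S_{{\rm ram}} \setminus J}\Bigl(1 - \frac{N_{I_v}}{\# I_v}{\mathcal F}_v^{-1}\Bigr) \cdot \omega^T
\]
as $J$ ranges over subsets of $S_{{\rm ram}}$, and the paper identifies each such product with $\nu_J(\theta_{K_J,\, S_{{\rm ram}}\setminus J}^T)$, where $K_J$ is the subfield fixed by the $I_v$ for $v \in J$ and $\nu_J$ is multiplication by $\prod_{v \in J} N_{I_v}$. Since $S_{{\rm ram}} \setminus J$ contains all primes ramified in $K_J/k$, Deligne--Ribet now applies to $\theta_{K_J,\, S_{{\rm ram}}\setminus J}^T$, and integrality follows. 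Your character-by-character analysis for a single $v$ is essentially the case $\#S_{{\rm ram}} = 1$ of this identification; the missing idea is that the same identification works for every subset $J$, and that one cannot avoid handling all ramified primes at once.
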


\begin{proof} 
For an intermediate field $F$ of $K/k$ and a finite set $S$ 
of finite primes that contains all ramifying primes in $F$, 
we define the equivariant zeta function $\theta_{F,S}(s)$ by 
\begin{equation} \label{EquivariantSTheta}
\theta_{F,S}(s)=\prod_{\chi \in {\hat \Gal}(F/k)} L_{S}(s, \chi^{-1}) 
\epsilon_{\chi}
\end{equation} 
where $L_{S}(s, \chi)$ is the $L$-function 
obtained by removing the Euler factors for all 
$v \in S$.  
We consider its $T$-modification 
$$
\theta^{T}_{F,S}(s)=
(\prod_{v \in T}(1-{\mathcal F}_{v}^{-1}N(v)^{1-s}))\theta_{F,S}(s)
$$
and the $(S,T)$-Stickelberger element 
\begin{equation} \label{ThetaST}
\theta^{T}_{F,S}=\theta^{T}_{F,S}(0)=
(\prod_{v \in T}(1-{\mathcal F}_{v}^{-1}N(v)))\theta_{F,S}(0).
\end{equation}
It is known by Deligne and Ribet and Cassou-Nogu\`{e}s 
that $\theta^{T}_{F,S} \in \ZZ[\Gal(F/k)]$.

We put $S_{{\rm r}}=S_{{\rm ram}}(K/k)$.
For a subset $J$ of $S_{{\rm r}}$ we define $K_J$ to be the maximal 
subextension of $k$ in $K$ that are unramified at all primes in $J$. 
Namely $K_{J}$ is the fixed subfield of the subgroup of $G$ 
generated by $I_{v}$ 
for all $v \in J$. 
If $J$ is empty, we take $K_{J}=K$. 
We put $N_{J}=\prod_{v \in J} N_{I_{v}} \in \ZZ[G]$. 
Then the multiplication by $N_{J}$ defines a homomorphism 
$$\nu_{J}: \ZZ[\Gal(K_{J}/k)] \longrightarrow \ZZ[G].$$
Note that this is not a norm homomorphism for $K/K_{J}$ but 
the multiplication by some constant of the norm homomorphism.
We have 
$$\nu_{J}(\theta_{F_{J}, S_{{\rm r}} \setminus J}^{T})
=
\prod_{v \in J} N_{I_{v}} \prod_{v \in S_{{\rm r}} \setminus J}
(1-\frac{N_{I_{v}}}{\#I_{v}} {\mathcal F}_{v}^{-1}) \omega^{T}.
$$
This equality can be proved by comparing the $\chi$-components of 
both sides for 
each character $\chi$ of $G$ (see, for example, 
Lemma 2.1 in \cite{Ku1}).

This equality shows that $\Theta^{T}(K)$ is generated by 
$\nu_{J}(\theta_{F_{J}, S_{{\rm r}} \setminus J}^{T})$ for all 
subsets $J$ of $S_{{\rm r}}$.
In particular, we obtain $\Theta^{T}(K) \subset \ZZ[G]$.
This completes the proof.
\end{proof}

\vspace{5mm}

For any group ring $R[G]$ we denote by $x \mapsto x^{\#}$ the 
involution $R[G] \longrightarrow R[G]$ 
induced by $\sigma \mapsto \sigma^{-1}$ for 
all $\sigma \in G$. 
 
\begin{Conjecture} \label{C1}
Put $R=\ZZ[1/2][G]^-$, 
$((Cl_{K}^{T})')^{\vee}=((Cl_{K}^{T}\otimes \ZZ[1/2])^{-})^{\vee}$, 
and $\Theta^{T}(K)'=(\Theta^{T}(K) \otimes \ZZ[1/2])^{-} 
\subset R$. 
Then 
$$\Fitt_{R}(((Cl_{K}^{T})')^{\vee})=(\Theta^{T}(K)')^{\#}$$
holds true. 
\end{Conjecture}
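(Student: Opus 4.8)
The plan is to connect the Fitting ideal of $((Cl_K^T)')^\vee$ to the Stickelberger ideal via the exact sequence (\ref{e4}) in Proposition \ref{P2}, which presents $((Cl_K^T)^\vee_{\ZZ_p})^-$ as the cokernel of the injection $\psi^\circ \colon (W_{S_\infty}^\circ)^-_{\ZZ_p} \hookrightarrow (\GA^\circ_{\ZZ_p})^-$ of two $\ZZ_p[G]^-$-lattices of the same rank $\#S'$. Since $(\GA^\circ_{\ZZ_p})^-$ is free over $\ZZ_p[G]^-$, the Fitting ideal of the cokernel is computed by a determinant: after choosing a $\QQ_p[G]^-$-basis, $\psi^\circ$ is represented by a square matrix, and $\Fitt((Cl_K^T)^\vee_{\ZZ_p})^-$ is the product of $\det(\psi^\circ)$ with the "defect" coming from the fact that $(W_{S_\infty}^\circ)^-_{\ZZ_p}$ is not free but is the explicit module of (\ref{I3}), namely $\bigoplus_{v\in S'\setminus S_{\mathrm{ram}}}\ZZ_p[G]^-\oplus\bigoplus_{v\in S_{\mathrm{ram}}}(N_{I_v},1-\tfrac{N_{I_v}}{\#I_v}\mathcal F_v)\ZZ_p[G]^-$. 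The key identity to isolate is: $\Fitt_{\ZZ_p[G]^-}(((Cl_K^T)^\vee_{\ZZ_p})^-) = \big(\prod_{v\in S_{\mathrm{ram}}}(N_{I_v},1-\tfrac{N_{I_v}}{\#I_v}\mathcal F_v)\big)\cdot (\text{the }\ZZ_p[G]^-\text{-ideal generated by }\det\psi^\circ)$, and then to identify the determinant term with $(\omega^T)^\#$ up to units, so that the product reproduces $(\Theta^T(K)')^\#$ after comparing $U_v$ (defined with $\mathcal F_v^{-1}$) with the factor $(N_{I_v},1-\tfrac{N_{I_v}}{\#I_v}\mathcal F_v)$ via the involution $\#$.

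First I would invoke Conjecture \ref{C2} (equivalently, by Proposition \ref{eTNCimpliesC2}, eTNC) to pin down $\psi_S^\circ$, and then use the comparison of the two homomorphisms $\psi_S^\circ$ and $\psi^\circ$ carried out in \S\ref{ComparisonofTwoHom}: by Proposition \ref{PropCalS} the cokernel ${\mathcal S}_{K,S}^T$ of $\psi_S^\circ$ has a subquotient description involving $\theta_{K,S}^T$ through eTNC, and the passage from $S$ to $S_\infty$ (removing the ramified and auxiliary primes from $S'$) is exactly what converts $\theta_{K,S}^T$ into $\omega^T$ times the local correction factors $U_v$ — this is the same bookkeeping as in the proof of Proposition \ref{P3}, where $\nu_J(\theta^T_{F_J,S_{\mathrm r}\setminus J})$ was expressed through $N_{I_v}$ and $(1-\tfrac{N_{I_v}}{\#I_v}\mathcal F_v^{-1})$. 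Concretely, I expect that eTNC gives $\det(\psi_S^\circ)\ZZ_p[G] = (\theta_{K,S'}^T)^\#\ZZ_p[G]$ (or $(\theta_{K,S}^T)^\#$ with suitable unit/Euler-factor adjustments from the primes in $S'\setminus S$, which are units since those primes split enough), and then the local modules $W_{K_w}$ for the finitely many primes in $S'\setminus S_\infty$ account precisely for the discrepancy between the free presentation at level $S$ and the non-free presentation (\ref{I3}) at level $S_\infty$.

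The main steps, in order: (1) write down the $\QQ[G]$-basis $(e_v)_{v\in S'}$ of $W_{S_\infty}^\circ\otimes\QQ$ constructed before Proposition \ref{P2} and the lattice description (\ref{BasisIsom4}), so that $(W_{S_\infty}^\circ)^-_{\ZZ_p}$ sits inside $(\GA^\circ_{\ZZ_p})^-\otimes\QQ_p$ with explicit "denominators" $h_v^{-1}(N_{I_v},1-\tfrac{N_{I_v}}{\#I_v}\mathcal F_v)$; (2) apply eTNC/Conjecture \ref{C2} to identify $\psi^\circ$ (or rather $\psi_S^\circ$) with a map whose determinant is (the $\#$ of) a Stickelberger-type element; (3) assemble $\Fitt_{\ZZ_p[G]^-}$ of the cokernel of $\psi^\circ$ as the product of the ideal generated by that determinant with $\prod_{v\in S_{\mathrm{ram}}}(N_{I_v},1-\tfrac{N_{I_v}}{\#I_v}\mathcal F_v)\ZZ_p[G]^-$, using the standard fact that $\Fitt$ of a cokernel of a map between modules, one of which is free, multiplies along the "index" of the other; (4) match the resulting ideal with $(\Theta^T(K)\otimes\ZZ_p)^{-,\#}$ by comparing $(N_{I_v},1-\tfrac{N_{I_v}}{\#I_v}\mathcal F_v)^\# = (N_{I_v},1-\tfrac{N_{I_v}}{\#I_v}\mathcal F_v^{-1}) = U_v$ and $(\omega^T)^\# $ with the $\#$ of the determinant term; (5) observe that $p$ was an arbitrary odd prime and $2$ is inverted in $R$, so collecting over all odd $p$ yields the statement over $R=\ZZ[1/2][G]^-$.

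The hard part will be step (2)–(3): extracting from eTNC (in the Burns–Sano–Kurihara formulation via ${\mathcal S}_{K,S}^T$ and $\theta_{K,S}^T$) the precise statement about $\det\psi^\circ$ as an element of $\ZZ_p[G]^-$ rather than merely a statement about Fitting ideals of the Selmer module $\mathcal S_{K,S}^T$ — one must be careful that $\psi^\circ$ is a map between lattices of equal rank so that its determinant is well-defined up to $\ZZ_p[G]^{-,\times}$, and that the cohomological triviality of $\GA^-_{\ZZ_p}$ (established in Proposition \ref{P2}) is what guarantees the relevant module is of finite projective dimension. The secondary subtlety is the local factor bookkeeping in step (4): one must verify that the non-free module (\ref{I3}) contributes exactly the ideal $\prod_v(N_{I_v},1-\tfrac{N_{I_v}}{\#I_v}\mathcal F_v)$ to the Fitting ideal and not some proper multiple or divisor, which requires knowing $\Fitt_{\ZZ_p[G]^-}$ of each local module $W_v^\circ$ (or rather its behaviour in an exact sequence), together with the compatibility of Fitting ideals with the filtration by the $W_v^\circ$ — here Greither's Lemma 6.1 and Lemma 8.3, already quoted, do the local computation, so the remaining work is purely the assembly.
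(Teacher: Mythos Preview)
Your proposal is correct and follows essentially the same route as the paper: the argument is Theorem~\ref{T1} (your steps (1), (3), (4), (5)) combined with Theorem~\ref{theorem2}\,(1) (your step (2)), both of which rely on Proposition~\ref{P2} exactly as you describe.

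One clarification on the mechanism of your step~(2), which you rightly flag as the hard part. You write the comparison as passing from $\psi_{K,S}^\circ$ to $\psi^\circ$ at the level of $K$ alone, expecting the discrepancy between $\theta_{K,S}^T$ and $\omega^T$ to be absorbed by Euler factors and the local modules. But $\omega^T$ is built from \emph{primitive} $L$-values, and the Euler factors separating $(\theta_{K,S}^T)^\chi$ from $L_T(0,\chi^{-1})$ depend on which primes ramify in $K_\chi$, not in $K$; there is no single element of $\ZZ[G]$ doing this uniformly. The paper's implementation exploits the full strength of Conjecture~\ref{C2} (determinant identities for \emph{all} intermediate $F$ and \emph{all} admissible $S$): for each odd character $\chi$ one descends to $F=K_\chi$ with $S_\chi=S_\infty\cup S_{\rm ram}(K_\chi/k)$, computes the change-of-basis map $\iota:\GB_F^\circ\to W_{K,S_\infty}^\circ\otimes\QQ$ explicitly via (\ref{equationramifiedcase}) and (\ref{equationunramifiedcase}), and reads off $(\det\psi^\circ)^\chi=(\prod h_v)^\chi(\theta_{F,S_\chi}^{T})^{\#,\chi}=(\prod h_v)^\chi L_T(0,\chi)$. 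So your outline is right, but the comparison runs through the tower $\{K_\chi\}$ rather than staying at $K$.
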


Now we study this conjecture, using Proposition \ref{P2}.
Consider the $\QQ_{p}[G]^-$-homomorphism
$$
\psi^{\circ}:(W_{S_{\infty}}^{\circ} \otimes \QQ_{p})^- 
\longrightarrow (\GA^{\circ} \otimes \QQ_{p})^- \ .
$$ 
For a finite prime $v$ in $S'$ let $e_{v}$ be 
as in (\ref{BasisIsom3}) (see also (\ref{BasisIsom2})).
For an infinite prime $v$ we also defined 
$e_{v}$ of $W_{S_{\infty}}^{\circ} \otimes \QQ$
in \S \ref{CM-extensionsSubsection}.
We also write $e_{v}$ for the minus component of $e_{v}$, 
and take a basis $(e_{v})_{v \in S'}$ of 
$(W_{S_{\infty}}^{\circ} \otimes \QQ_{p})^-$.

We consider $\det \psi^{\circ} \in \QQ[G]^-$ 
with respect to the basis $(e_{v})_{v \in S'}$ 
and a basis of $(\GA^{\circ} \otimes \ZZ_{p})^-$
which is a free $\ZZ_{p}[G]^-$-module of rank $\#S'$. 
Then $\det \psi^{\circ}$ is 
determined up to unit of $\ZZ_{p}[G]^-$, 
and is a nonzero divisor of $\QQ_{p}[G]^-$.

Recall that $h_{v} \in \QQ[G]$ was defined in (\ref{hv}) 
(see also (\ref{gv})).

\begin{Theorem} \label{T1}
For any odd prime number $p$ we have 
$$\Fitt_{\ZZ_{p}[G]^-}(((Cl_{K}^{T})^{\vee})^-_{\ZZ_{p}})
=((\prod_{v \in S_{{\rm ram}}(K/k)} 
U_{v}^{\#}) (\prod_{v \in S' \setminus S_{\infty}} 
h_{v}^{-1}))_{\ZZ_{p}}^{-} 
\det \psi^{\circ}$$
where $\det \psi^{\circ}$ is taken with respect to 
$(e_{v})_{v \in S'}$ and a basis of 
$(\GA^{\circ} \otimes \ZZ_{p})^-$.
\end{Theorem}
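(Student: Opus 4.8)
The plan is to read the theorem off the exact sequence (\ref{e4}) of Proposition \ref{P2}, which presents $((Cl_{K}^{T})^{\vee})^-_{\ZZ_{p}}$ as the cokernel of the injective map $\psi^{\circ}:(W_{S_{\infty}}^{\circ})^-_{\ZZ_{p}}\to(\GA^{\circ}_{\ZZ_{p}})^-$, whose target is a free $\ZZ_{p}[G]^-$-module of rank $\#S'$. The computations carried out just before that proposition, together with the identity $(N_{I_{v}},1-\frac{N_{I_{v}}}{\#I_{v}}{\mathcal F}_{v})\ZZ[G]=\ZZ[G]$ for unramified $v$, give inside the free $\QQ_{p}[G]^-$-module $(W_{S_{\infty}}^{\circ}\otimes\QQ_{p})^-=\bigoplus_{v\in S'}\QQ_{p}[G]^-e_{v}$ a description $(W_{S_{\infty}}^{\circ})^-_{\ZZ_{p}}=\bigoplus_{v\in S'}\mathfrak{a}_{v}e_{v}$, where $\mathfrak{a}_{v}=\ZZ_{p}[G]^-$ for $v\in S_{\infty}$ and $\mathfrak{a}_{v}=h_{v}^{-1}(U_{v}^{\#})^-_{\ZZ_{p}}$ for finite $v$ (setting $U_{v}^{\#}=\ZZ[G]$ when $v$ is unramified). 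It is essential here to use the explicit embedding (\ref{BasisIsom4}) of $W_{v}^{\circ}$ into $\QQ[G]e_{v}$ attached to the chosen generator $e_{v}=c_{v}((1,1))$, rather than merely the abstract isomorphism (\ref{I3}), since it is this embedding that produces the factor $h_{v}^{-1}$, while the matching of the generators $N_{I_{v}}$ and $1-\frac{N_{I_{v}}}{\#I_{v}}{\mathcal F}_{v}$ with $U_{v}^{\#}$ relies on $N_{I_{v}}^{\#}=N_{I_{v}}$ and $({\mathcal F}_{v}^{-1})^{\#}={\mathcal F}_{v}$. Each $\mathfrak{a}_{v}$ is a finitely generated $\ZZ_{p}[G]^-$-submodule of $\QQ_{p}[G]^-$ containing $1$ (since $h_{v}\in U_{v}^{\#}$), hence contains a non-zero-divisor.

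With the arithmetic now encapsulated in $\psi^{\circ}$, what remains is a purely algebraic statement about Fitting ideals: \emph{let $R$ be a commutative ring with total quotient ring $Q$, and let $\phi:\bigoplus_{i=1}^{n}\mathfrak{a}_{i}\to R^{n}$ be an injective $R$-homomorphism with each $\mathfrak{a}_{i}\subset Q$ a finitely generated $R$-submodule containing a non-zero-divisor; then $\Fitt_{R}(\Coker\phi)=(\prod_{i}\mathfrak{a}_{i})\cdot\det\phi$.} Here, for each $i$, the relation $a'\phi(a)=\phi(a'a)=a\phi(a')$ shows there is a unique $v_{i}\in Q^{n}$ with $\phi(a)=av_{i}$ on the $i$-th summand, and $\det\phi$ is the determinant of the matrix with columns $v_{1},\dots,v_{n}$ taken with respect to a basis of $R^{n}$ (well defined up to a unit of $R$). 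To prove this, first replace $\mathfrak{a}_{i}$ by $c_{i}\mathfrak{a}_{i}$ for a non-zero-divisor $c_{i}\in R$ with $c_{i}\mathfrak{a}_{i}\subset R$: this leaves $\Coker\phi$ unchanged while multiplying $\det\phi$ by $\prod c_{i}^{-1}$ and $\prod_{i}\mathfrak{a}_{i}$ by $\prod c_{i}$, so the asserted identity is unaffected and we may assume each $\mathfrak{a}_{i}\subset R$ is an ideal. Choosing generators $a_{i1},\dots,a_{ik_{i}}$ of $\mathfrak{a}_{i}$, the vectors $a_{ij}v_{i}=\phi(a_{ij})$ lie in $R^{n}$ and generate the image of $\phi$, so $\Coker\phi$ is presented by the matrix whose columns are these vectors; an $n\times n$ minor of it equals $a_{i_{1}j_{1}}\cdots a_{i_{n}j_{n}}$ times the determinant of the matrix with columns $v_{i_{1}},\dots,v_{i_{n}}$, which vanishes unless $i_{1},\dots,i_{n}$ is a permutation of $1,\dots,n$, in which case it is $\pm a_{1j_{1}}\cdots a_{nj_{n}}\det\phi$. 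Hence $\Fitt_{R}(\Coker\phi)$ is generated by all products $a_{1j_{1}}\cdots a_{nj_{n}}\det\phi$, i.e.\ equals $(\prod_{i}\mathfrak{a}_{i})\cdot\det\phi$. (This is essentially the determinant computation of \cite{Grei4}, now carried out in the $T$-modified setting.)

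Applying this with $R=\ZZ_{p}[G]^-$, $n=\#S'$, $\phi=\psi^{\circ}$ — whose determinant with respect to $(e_{v})_{v\in S'}$ and a basis of $(\GA^{\circ}\otimes\ZZ_{p})^-$ is precisely the $\det\psi^{\circ}$ of the theorem — yields $\Fitt_{\ZZ_{p}[G]^-}(((Cl_{K}^{T})^{\vee})^-_{\ZZ_{p}})=(\prod_{v\in S'}\mathfrak{a}_{v})\cdot\det\psi^{\circ}$. It remains to identify the product: the infinite places contribute $1$, and separating the denominators gives $\prod_{v\in S'}\mathfrak{a}_{v}=(\prod_{v\in S'\setminus S_{\infty}}h_{v}^{-1})(\prod_{v\in S'\setminus S_{\infty}}(U_{v}^{\#})^-_{\ZZ_{p}})$; since $(U_{v}^{\#})^-_{\ZZ_{p}}=\ZZ_{p}[G]^-$ for unramified $v$, this equals $\big((\prod_{v\in S_{{\rm ram}}(K/k)}U_{v}^{\#})(\prod_{v\in S'\setminus S_{\infty}}h_{v}^{-1})\big)^-_{\ZZ_{p}}$, which is the coefficient ideal in the statement, completing the proof.

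The main obstacle is less any single deep step than getting all the identifications exactly right. One has to verify that the decomposition $(W_{S_{\infty}}^{\circ})^-_{\ZZ_{p}}=\bigoplus\mathfrak{a}_{v}e_{v}$ passes correctly through $\otimes\ZZ_{p}$ and the minus projection (harmless for odd $p$, but the precise fractional ideals $\mathfrak{a}_{v}$, and in particular the appearance of ${\mathcal F}_{v}$ versus ${\mathcal F}_{v}^{-1}$ and the effect of the involution $\#$, must be tracked with care), and that $\det\psi^{\circ}$ — defined only up to a unit of $\ZZ_{p}[G]^-$ by the choice of basis of the free target — is normalized so as to coincide with the lemma's $\det\phi$. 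The genuinely delicate bookkeeping point is that the factors $h_{v}^{-1}$ must be built into $\mathfrak{a}_{v}$ through the embedding (\ref{BasisIsom4}) rather than the abstract isomorphism (\ref{I3}); this is exactly what forces the $\prod h_{v}^{-1}$ into the formula, and it is also why Proposition \ref{P2} (injectivity of $\psi^{\circ}$ and the rank $\#S'$ of the free target) is precisely what is needed to make the lemma applicable.
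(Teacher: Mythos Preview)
Your proof is correct and follows essentially the same approach as the paper: both use the exact sequence (\ref{e4}) of Proposition \ref{P2} together with the explicit description (\ref{BasisIsom4}) of $W_{v}^{\circ}$ as $h_{v}^{-1}U_{v}^{\#}\ZZ[G]e_{v}$, then read off the Fitting ideal and simplify using $U_{v}=\ZZ[G]$ for unramified $v$. The only difference is that you state and prove explicitly the general algebraic lemma $\Fitt_{R}(\Coker\phi)=(\prod_{i}\mathfrak{a}_{i})\det\phi$, which the paper invokes without comment in the line ``It follows from the exact sequence (\ref{e4}) that \ldots''; your added detail is correct and welcome.
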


\begin{proof}
We use the presentation of $((Cl_{K}^{T})^{\vee})^-_{\ZZ_{p}}$
in Proposition \ref{P2}. 
For a finite prime $v \in S'$ we proved in (\ref{BasisIsom4}) that 
$$W_{v}^{\circ}=h_{v}^{-1} U_{v}^{\#} \ZZ[G]e_{v}.$$
Therefore, the minus part of $(W_{S_{\infty}}^{\circ})_{\ZZ_{p}}$ 
can be written as 
$$(W_{S_{\infty}}^{\circ})^-_{\ZZ_{p}}  
=
\bigoplus_{v \in S_{\infty}} 
\ZZ_{p}[G]^- e_{v} \oplus 
\bigoplus_{v \in S' \setminus S_{\infty}} 
(h_{v}^{-1}U_{v}^{\#})_{\ZZ_{p}}^- e_{v}.
$$
It follow from the exact sequence (\ref{e4}) that 
$$\Fitt_{\ZZ_{p}[G]^-}(((Cl_{K}^{T})^{\vee})^-_{\ZZ_{p}})
=(\prod_{v \in S' \setminus S_{\infty}} 
h_{v}^{-1}U_{v}^{\#})_{\ZZ_{p}}^{-} 
\det \psi^{\circ}$$
If $v$ is unramified, we have $U_{v}=\ZZ[G]$, which 
implies the conclusion of Theorem \ref{T1}.
\end{proof}

By Theorem \ref{T1}, 
we know that Conjecture \ref{C1} is equivalent to 
\begin{equation} \label{Conjecture2}
(\prod_{v \in S' \setminus S_{\infty}} h_{v}^{-1})
\det \psi^{\circ} \cdot \ZZ_{p}[G]^{-}=
(\omega^T\ZZ_{p}[G]^{-})^{\#}
\end{equation}
for all odd $p$. 

\subsection{A conjecture on $\det \psi_{S}$} \label{ComparisonofTwoHom}

For a finite set $S$ such that 
$S_{\infty} \cup S_{{\rm ram}}(K/k) \subset S \subset S'$, 
we consider the homomorphism 
$\psi_{S}: \GA \longrightarrow \GB$ which was constructed in 
\S \ref{ConstructionofpsiS}, and study its determinant 
$\det \psi_{S}$. 

Since we defined $\GB$ by $\GB= \bigoplus_{w \in (S')_{K}} \ZZ[G_{w}]$, 
fixing a prime $w$ above $v$, we have 
$\GB=\bigoplus_{v \in S'} \ZZ[G]$. 
For each $v$, we take a canonical 
basis $(e_{v}^{\GB})_{v \in S'}$ of $\GB$ where 
$e^{\GB}_{v}$ is the element whose $v$-component is $1$ and 
other components are zero. 

For an intermediate field $F$ of $K/k$, let 
$\psi_{F,S}: \GA_{F} \longrightarrow \GB_{F}$ denote 
the $\psi_{S}$ for $F$. 
We write $\GA_{K}$, $\GB_{K}$ for $\GA$, $\GB$ in order to 
clarify the field over which these modules are defined. 
For modules $W_{S}$,$A$, $B$,..., we write $W_{F,S}$, $A_{F}$,
$B_{F}$,...for the corresponding modules for $F$.


In order to compare $\psi_{F, S}$ for several $F$ and $S$ below, 
it is convenient to remove the ambiguity of the definition of 
this map (recall that $\psi_{F,S}$ was defined as an extension of 
$\psi_{F,S}: A_{F} \longrightarrow B_{F}$). 
We take and fix an infinite prime $v_{\infty} \in S$, and 
define $\psi_{F,S}: \GA_{F}=A_{F} \oplus \ZZ[\Gal(F/k)] 
\longrightarrow \GB_{F}$ by 
$\psi_{F,S}((0,1))=e_{F,v_{\infty}}^{\GB}$.

For $S$ such that $S_{\infty} \cup S_{{\rm ram}}(F/k) 
\subset S \subset S'$, we define 
$\theta_{F,S}^{T} \in \ZZ[\Gal(F/k)]$ as in (\ref{ThetaST}).

\begin{Conjecture} \label{C2}
The module $\GA_{K}$ is a free $\ZZ[G]$-module with 
a basis $(e_{K,v}^{\GA})_{v \in S'}$ such that 
for any intermediate field $F$ of $K/k$ and 
for any $S$ such that $S_{\infty} \cup S_{{\rm ram}}(F/k) 
\subset S \subset S'$, we have 
$$
\det(\psi_{F,S})=\theta_{F,S}^{T}
$$
Here, we define a basis $(e_{F,v}^{\GA})_{v \in S'}$ of $\GA_{F}$
as the image of $(e_{K,v}^{\GA})_{v \in S'}$
under the natural map $\GA_{K} \longrightarrow \GA_{F}$, 
and a basis $(e_{F,v}^{\GB})_{v \in S'}$ of $\GB_{F}$ 
as the canonical basis for $\GB_{F}$ (so also the image of 
$(e_{K,v}^{\GB})_{v \in S'}$), and 
$\det(\psi_{F,S})$ is taken with respect to the bases
$(e_{F,v}^{\GA})_{v \in S'}$ of $\GA_{F}$ and  
$(e_{F,v}^{\GB})_{v \in S'}$ of $\GB_{F}$.
\end{Conjecture}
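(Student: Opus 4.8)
Conjecture \ref{C2} is, as its name indicates, a conjecture; so the task here is really to explain why it is plausible and to outline the strategy by which one would expect to prove it (or to reduce it to known inputs). The plan below is written in that spirit; in \S \ref{ComparisonofTwoHom} it is in fact shown that eTNC (in the Burns--Kurihara--Sano formulation) implies this statement, and the argument sketched here is the skeleton of that implication.

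\medskip

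The plan is to prove Conjecture \ref{C2} by identifying the determinant of $\psi_{F,S}$ with the value at $0$ of the relevant zeta-element via the equivariant Tamagawa number conjecture. First I would recall, from the Remark after Proposition \ref{PropCalH}, that the two-term complex $[\GA_F \stackrel{\psi_{F,S}}{\longrightarrow} \GB_F]$ (concentrated in degrees $0$ and $1$) is a representative of $R\Gamma_T(\OO_{F,S},{\mathbb G}_m)$, and hence, after the standard identification of Weil-étale with étale cohomology in this range, computes the $(S,T)$-modified cohomology whose Euler characteristic is governed by $\theta_{F,S}^T$. Since $\GA_F$ is free of rank $\#S'$ (this was established, at the level of the minus $\ZZ_p$-part, in Proposition \ref{P2}; one needs the analogous statement integrally, which follows because $A$ is cohomologically trivial and $\ZZ$-free) and $\GB_F$ is free of rank $\#S'$ by Proposition \ref{PropCalH}, the map $\psi_{F,S}$ has a well-defined determinant once bases are fixed, and $\det\psi_{F,S}$ is a nonzerodivisor whose image in $\QQ[\Gal(F/k)]$ is forced, up to a unit of $\ZZ[\Gal(F/k)]$, by the leading-term/zeta-value formula. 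The content of the conjecture is that, for a \emph{single, uniform} choice of $\ZZ[G]$-basis $(e_{K,v}^{\GA})_{v\in S'}$ of $\GA_K$ — compatible with descent to all intermediate $F$ and with the pinned-down normalization $\psi_{F,S}((0,1)) = e_{F,v_\infty}^{\GB}$ — this unit is exactly $1$, i.e.\ the determinant is precisely $\theta_{F,S}^T$ on the nose.

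\medskip

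The strategy then has three steps. Step one: fix the normalization. Using the splitting $\GB \simeq B \oplus \ZZ[G]$ coming from the middle horizontal exact sequence in the diagram before Proposition \ref{PropCalH}, and the corresponding $\GA = A \oplus \ZZ[G]$, decree $\psi_{F,S}((0,1)) = e_{F,v_\infty}^{\GB}$ as in the statement; this kills the ambiguity in the extension of $A_F \to B_F$ to $\GA_F \to \GB_F$ and makes $\det\psi_{F,S}$ genuinely well-defined as an element of $\ZZ[\Gal(F/k)]$ once the $\GA$-basis is chosen. Step two: produce the basis of $\GA_K$. Here I would invoke eTNC in the style of \cite{BKS1}: it provides a canonical basis of the determinant module of $R\Gamma_T(\OO_{K,S'},{\mathbb G}_m)$ (a "zeta element") whose image under the period-regulator map is $\theta_{K,S'}^T$, and, because the complex is perfect of amplitude $[0,1]$ represented by $[\GA_K \to \GB_K]$ with $\GB_K$ free, this zeta element pins down a basis $(e_{K,v}^{\GA})_{v\in S'}$ of $\GA_K$ for which $\det\psi_{K,S'} = \theta_{K,S'}^T$. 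Step three: descent and independence of $S$. For an intermediate field $F$, the complex for $F$ is obtained from that for $K$ by $\otimes^{\mathbf L}_{\ZZ[G]}\ZZ[\Gal(F/k)]$, and the functoriality of eTNC (compatibility of zeta elements under restriction) gives $\det\psi_{F,S'} = \theta_{F,S'}^T$ with the descended basis. Enlarging or shrinking $S$ within $[S_\infty\cup S_{\mathrm{ram}}(F/k), S']$ changes both $\psi_{F,S}$ and $\theta_{F,S}^T$ by the same explicit local Euler factors at the primes in the symmetric difference — on the analytic side this is visible in (\ref{ThetaST}), and on the module side it is visible in the passage between $W_S$ and $W_{S'}$ recorded in \S \ref{ConstructionofpsiS} (the identifications $W_{K_w}\simeq\ZZ[G_w]$ for unramified $w$, giving $\GB/W_S \simeq \prod_{w\in S_K}\ZZ$) — so equality for one admissible $S$ propagates to all.

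\medskip

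The main obstacle, as always with statements of this precision, is the \textbf{exact} matching — not merely up to a unit of $\ZZ[\Gal(F/k)]$ — of the determinant with $\theta_{F,S}^T$, together with the claim that a \emph{single} $\ZZ[G]$-basis of $\GA_K$ works \emph{simultaneously} for all $F$ and all admissible $S$. The unit ambiguity is where $K_2$-type or $\mu$-type obstructions could in principle hide; controlling it requires that the canonical eTNC basis behave well under the two functorialities in play (change of field $K\leadsto F$ and change of coefficient set $S\leadsto S'$), and that these two be compatible with each other and with the rigid normalization at $v_\infty$. I expect this compatibility to be the crux: once it is in place, everything else is the bookkeeping of local Euler factors already encoded in $g_v$, $h_v$, $N_{I_v}$ and ${\mathcal F}_v$ from \S \ref{CM-extensionsSubsection}. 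This is precisely why, in this paper, Conjecture \ref{C2} is left as a conjecture in general and is established only under eTNC (Proposition \ref{eTNCimpliesC2}), where the needed functorialities are theorems rather than hopes.
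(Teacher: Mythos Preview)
Your outline matches the paper's actual argument (Proposition \ref{eTNCimpliesC2}) in its first two steps: the zeta element $z_{K/k,S,T}$ from eTNC, together with the fixed basis of $\GB_K$, yields the desired basis of $\GA_K$, and descent to intermediate $F$ follows from the norm-compatibility of zeta elements. So the skeleton is right.

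The gap is in your Step three. You write that shrinking $S$ ``changes both $\psi_{F,S}$ and $\theta_{F,S}^T$ by the same explicit local Euler factors \ldots\ so equality for one admissible $S$ propagates to all.'' This is the right intuition but not a proof, because the Euler factor $1-{\mathcal F}_v^{-1}$ at an unramified $v$ is a \emph{zero-divisor} in $\ZZ[\Gal(F/k)]$: on any character $\chi$ with $\chi({\mathcal F}_v)=1$ it vanishes. Thus from $\det\psi_{F,S}=(1-{\mathcal F}_v^{-1})\det\psi_{F,S''}$ and $\theta_{F,S}^T=(1-{\mathcal F}_v^{-1})\theta_{F,S''}^T$ you cannot cancel to conclude $\det\psi_{F,S''}=\theta_{F,S''}^T$. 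The paper handles exactly this point by decomposing into $\chi$-components: when $\chi({\mathcal F}_v)\neq 1$ the factor is invertible and cancellation works; when $\chi({\mathcal F}_v)=1$ (so $v$ splits completely in the field cut out by $\chi$), one instead uses a separate argument comparing the zeta elements directly via the map $i_{S'',S}: R\Gamma_T(\OO_{F,S''},{\mathbb G}_m)\to R\Gamma_T(\OO_{F,S},{\mathbb G}_m)$ and the relation $\vartheta_{\lambda_{F,S}}(i_{S'',S}(z_{F/k,S'',T}))=(\log N(v))\,\theta_{F/k,S''}^{T\,*}(0)$. This character-by-character split, with the completely-split case treated through the regulator rather than through cancellation, is the missing idea in your sketch; without it the propagation step does not go through.

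A smaller point: your parenthetical that integral freeness of $\GA_K$ ``follows because $A$ is cohomologically trivial and $\ZZ$-free'' is not quite enough --- that gives projectivity, not freeness, over $\ZZ[G]$. In the paper the freeness is extracted from eTNC itself: the zeta element trivializes the determinant of a complex one of whose terms ($\GB_K$) is already free, which forces the other to be free as well.
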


We note that $\det (\psi_{F,S})=\theta_{F,S}^{T}$ in 
Conjecture \ref{C2} is the equality of not ideals, 
but of elements in $\ZZ[\Gal(F/k)]$.
Also, this conjecture asserts the existence of a good 
basis which can be used for any $F$ and $S$. 
This equivariant statement would remind one of  
the equivariant Tamagawa number conjecture. 
In fact, 

\begin{Proposition} \label{eTNCimpliesC2}
The equivariant Tamagawa number conjecture for $K/k$
(eTNC in short) implies Conjecture \ref{C2}.
\end{Proposition}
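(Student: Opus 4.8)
The plan is to translate the vanishing-order-zero part of the equivariant Tamagawa number conjecture into a concrete statement about the determinant of the complex computed by $\psi_{S}$, using the formalism of \cite{BKS1}. Recall from the Remark after Proposition \ref{PropCalH} that the two-term complex $[\GA \stackrel{\psi_{S}}{\longrightarrow} \GB]$ (placed in degrees $1$ and $2$) represents $R\Gamma_{T}(\OO_{K,S},\mathbb G_{m})$; equivalently, its linear dual represents the complex whose cohomology gives the Selmer module ${\mathcal S}_{S,T}(\mathbb G_{m/K})$. Since $K$ is a CM-field and $S\supset S_\infty$, over $\ZZ[1/2]$ (or after passing to the minus part, where we will actually work implicitly via the odd primes $p$) the relevant $L$-function $\theta_{F,S}^{T}(s)$ vanishes to order zero at $s=0$, and eTNC for the pair $(h^0(\Spec K)(1), \ZZ[G])$ — in the reformulation given in \cite{BKS1}, e.g. their Theorem 2.7 or the "leading term" form — identifies the image of $\theta_{F,S}^{T}$ under the Dirichlet-regulator trivialization with the determinant of the perfect complex $R\Gamma_{T}(\OO_{F,S},\mathbb G_{m})$, compatibly with the $\ZZ[\Gal(F/k)]$-action.

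First I would fix the field $F$ and the set $S$ and spell out what "$\det(\psi_{F,S}) = \theta_{F,S}^{T}$ as elements of $\ZZ[\Gal(F/k)]$" means: choosing the basis $(e^{\GB}_{F,v})_{v\in S'}$ of the free module $\GB_F$ is already done, and the content of Conjecture \ref{C2} is precisely that there is a basis $(e^{\GA}_{F,v})_{v\in S'}$ of $\GA_F$ — with $e^{\GA}_{F,v_\infty}=(0,1)$ matching the normalization $\psi_{F,S}((0,1))=e^{\GB}_{F,v_\infty}$ — for which the matrix of $\psi_{F,S}$ has determinant exactly $\theta_{F,S}^{T}$. Over $\QQ[\Gal(F/k)]$ the map $\psi_{F,S}$ becomes an isomorphism on the minus part (its kernel $(\OO_{F,S}^{T})^\times$ and cokernel ${\mathcal H}_{F,S}^{T}$ are finite there, since the unit rank vanishes on the minus part of a CM-field), so $\det(\psi_{F,S})$ is a well-defined nonzerodivisor in $\QQ[\Gal(F/k)]$ independent of basis choices up to units of $\ZZ[\Gal(F/k)]$; eTNC pins down the exact unit. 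The key step is then to invoke the $(S,T)$-version of eTNC as formulated in \cite{BKS1} (Theorem 1.5 (i), which the author already cites): it says $\Fitt_{\ZZ[G]}({\mathcal S}_{S,T}(\mathbb G_{m/K}))$, together with the regulator, is generated by $\theta_{K,S}^{T}$ — and more precisely, in its "determinant" incarnation, that the trivialized determinant of the complex equals $(\theta^T_{K,S})\cdot\ZZ[G]$ with the correct generator. Pulling this back along $\ZZ[G]\to\ZZ[\Gal(F/k)]$ — using the known functoriality of eTNC under restriction of the field, i.e. that eTNC for $K/k$ implies it for $F/k$ — yields the value of $\det(\psi_{F,S})$ for every intermediate $F$.

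Next I would address the compatibility across different $S$: changing $S$ within $S_\infty\cup S_{\mathrm{ram}}(F/k)\subset S\subset S'$ changes the complex $R\Gamma_{T}(\OO_{F,S},\mathbb G_{m})$ by adding the local terms $\bigoplus_{v\in S'\setminus S}R\Gamma_{T}(\kappa(v),\mathbb G_m)$-type contributions, which on the determinant side multiply by exactly the Euler factors $(1-{\mathcal F}_v^{-1}N(v))$ that distinguish $\theta_{F,S}^{T}$ from $\theta_{F,S'}^{T}$. Concretely, in the construction of \S\ref{ConstructionofpsiS} the only place $S$ enters is in passing from $W_{S'}$ to $W_S$ via the isomorphisms $W_{K_w}\simeq\ZZ[G_w]$ for $w\in(S'\setminus S)_K$; tracking this through shows $\det(\psi_{F,S})$ and $\det(\psi_{F,S'})$ differ by precisely $\prod_{v\in S'\setminus S}(1-{\mathcal F}_v^{-1}N(v))$, matching the behavior of $\theta^{T}_{F,S}$. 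So it suffices to prove the equality for the single largest set $S=S'$, where $Cl_{F,S'}^{T}=0$ and the complex is particularly simple; then the general $S$ follows formally. The one genuine subtlety — and the main obstacle — is the choice of basis: eTNC as usually stated gives the determinant of a complex only up to the ambiguity coming from the two different bases, so one must produce an honest $\ZZ[G]$-basis $(e^{\GA}_{K,v})_{v\in S'}$ of $\GA_K$ (not just of $\GA_K\otimes\QQ$) that is simultaneously compatible with the normalization at $v_\infty$ and functorial under $\GA_K\to\GA_F$, and verify that with this basis the eTNC-predicted determinant is $\theta_{F,S}^{T}$ on the nose rather than up to a unit. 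This is a linear-algebra-over-$\ZZ[G]$ point that the author presumably handles by using the freeness of $\GA_K$ established in Proposition \ref{P2} together with the explicit local bases $e_v$; I expect the argument to fix the basis at the "ramified" and archimedean places first via the $e_v$'s from (\ref{BasisIsom3})–(\ref{BasisIsom4}), extend arbitrarily at the remaining unramified places of $S'$, and then observe that eTNC forces the determinant to land in the right $\ZZ[G]$-orbit, with the residual unit absorbed by a final change of basis at the unramified places.
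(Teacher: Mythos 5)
Your roadmap is broadly the same as the paper's: use the zeta-element form of eTNC from \cite{BKS1}, establish $\det(\psi_{F,S})=\theta^T_{F/k,S}$ for the initial $S$, then vary $S$ by removing Euler factors one prime at a time, with the intermediate-field case handled by functoriality of zeta elements. However, there are two genuine gaps.

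First, you treat the choice of a $\ZZ[G]$-basis of $\GA_K$ as the main obstacle and speculate that it must be built by hand from the local elements $e_v$ and then extended at unramified places, with the residual unit ``absorbed by a final change of basis.'' That is not what happens, and the speculation would not actually pin the determinant down to the correct element rather than the correct ideal. The paper's resolution is much more direct: Conjecture 3.6 of \cite{BKS1} asserts the existence of a specific element $z_{K/k,S,T}$ which is a $\ZZ[G]$-generator of $\det_G R\Gamma_T(\OO_{K,S},\mathbb G_m)$ and which the regulator trivialization sends to $\theta^{T*}_{K/k,S}(0)$. Since $\GB_K$ already carries a fixed canonical basis, the generator $z_{K/k,S,T}$ of the determinant module \emph{is} the data of a basis $(e^{\GA}_{K,v})$ of $\GA_K$, and with it $\det(\psi_{K,S})=\theta^T_{K/k,S}$ is a definition, not something to be verified up to units. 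There is no ambiguity to absorb; the zeta element kills it. Likewise, $z_{F/k,S,T}$ being the image of $z_{K/k,S,T}$ is exactly the functoriality you invoke.

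Second, and more seriously, the reduction in $S$ is not as ``formal'' as you claim. The Euler factor at $s=0$ that distinguishes $\theta^T_{F,S}$ from $\theta^T_{F,S\setminus\{v\}}$ is $(1-{\mathcal F}_v^{-1})$, not $(1-{\mathcal F}_v^{-1}N(v))$ as you write (the latter is a $T$-modification factor). More importantly, on a character $\chi$ with $\chi({\mathcal F}_v)=1$ this Euler factor vanishes, so you cannot simply divide it out on the $\chi$-component to pass from $\det(\psi_{F,S})^\chi$ to $\det(\psi_{F,S\setminus\{v\}})^\chi$. The paper has to treat this case separately: when $v$ splits completely in $F$ the rank jumps, and one must use the regulator trivialization directly, verifying $\vartheta_{\lambda_{F,S}}(i_{S'',S}(z_{F/k,S'',T}))=(\log N(v))\,\theta^{T*}_{F/k,S''}(0)=\theta^{T*}_{F/k,S}(0)$, and then reduce the general $\chi({\mathcal F}_v)=1$ case to this by passing to $K_\chi$. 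Only for $\chi({\mathcal F}_v)\neq1$ does the ``multiply by $\det\phi_v=1-{\mathcal F}_v^{-1}$'' argument apply. Your proposal does not account for this degeneration of the Euler factor, which is the real technical content of the induction on $S$.
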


\begin{proof}
We assume that eTNC holds true. 
We use Conjecture 3.6 in \cite{BKS1} as eTNC, which claims that 
there is an element $z_{K/k,S,T}$ which is a basis of  
$\det_{G}R\Gamma_{T}(\OO_{K,S}, {\mathbb G}_{m})$ as a 
$\ZZ[G]$-module 
such that 
$\vartheta_{\lambda_{K,S}}(z_{K/k,S,T})=\theta^{{T \ {*}}}_{K/k,S}(0)$
where 
$$
\vartheta_{\lambda_{K,S}}:
{\rm det}_{G}R\Gamma_{T}(\OO_{K,S}, {\mathbb G}_{m}) \otimes {\mathbb R}
\stackrel{\simeq}{\longrightarrow} {\mathbb R}[G]
$$
is the isomorphism 
defined by using the Dirichlet regulator,  
and $\theta^{T \ {*}}_{K/k,S}(0)$ is the leading term of 
$(S,T)$-modified equivariant zeta function 
$\theta^T_{K/k,S}(s)$ at $s=0$ 
(see \S 3 in \cite{BKS1}).
Since the complex 
$R\Gamma_{T}(\OO_{K,S}, {\mathbb G}_{m})$ is 
represented by $\GA_{K} \stackrel{\psi_{S}}{\longrightarrow} \GB_{K}$, 
$\GA_{K}$ is free. 
Also, since we fixed a basis of $\GB_{K}$,  
$z_{K/k,S,T}$ yields a basis of $\GA_{K}$
which we denote by $(e_{K,v}^{\GA})_{v \in S'}$, and which 
we use from now on. 
By definition, we have 
$
\det(\psi_{K,S})=\theta^T_{K/k,S}(0)=\theta^{T}_{K/k,S}.
$
Also, for an intermediate field $F$, we know that the zeta element 
$z_{F/k,S,T}$ is the image of $z_{K/k,S,T}$.
This shows that 
$$
\det(\psi_{F,S})=\theta^T_{F/k,S}(0)=\theta^{T}_{F/k,S}.
$$

Suppose that $v$ is in $S \setminus S_{\rm ram}(K/k)$, and put 
$S''=S \setminus \{v\}$. 
We will next prove 
$\det(\psi_{F,S''})=\theta^{T}_{F/k,S''}$.

We first suppose that $v$ splits completely in $F$. 
We consider the natural homomorphism 
$i_{S'',S}:R\Gamma_{T}(\OO_{F,S''}, {\mathbb G}_{m}) 
\longrightarrow 
R\Gamma_{T}(\OO_{F,S}, {\mathbb G}_{m})$.
Then we know 
$$
\vartheta_{\lambda_{F,S}}(i_{S'',S}(z_{F/k,S'',T}))
=(\log N(v)) \theta^{T \ {*}}_{F/k,S''}(0)
=\theta^{T \ {*}}_{F/k,S}(0)
$$
(see, for example, 
the proof of \cite{BKS1} Proposition 3.4).
Therefore, we get 
$i_{S'',S}(z_{K/k,S'',T})=z_{K/k,S,T}$. 
This implies that  
$\det(\psi_{F,S''})=\theta^{T}_{F/k,S''}$.

Next, we consider a general $v$. 
For an element $x \in \QQ[\Gal(F/k)]$ and 
a character $\chi$ of $\Gal(F/k)$, 
we denote by $\epsilon_{\chi}=\epsilon_{F,\chi}$ 
the idempotent of the $\chi$-component for $\Gal(F/k)$, and
write $x^{\chi}=\epsilon_{\chi}x$ which is an element 
of the $\chi$-component of 
$\QQ(\mu_{m})[\Gal(F/k)]$ where $m=\# \Gal(F/k)$. 
In order to prove $\det(\psi_{F,S''})=\theta^{T}_{F/k,S''}$,
it suffices to show the equality 
$\det(\psi_{F,S''})^{\chi}=(\theta^{T}_{F/k,S''})^{\chi}$
for all characters $\chi$ of 
$\Gal(F/k)$.

Note that $v$ is unramified in $F$. 
If $\chi({\mathcal F}_{v})=1$, then we can prove this equality 
by the method in the previous paragraph. 
So we assume $\chi({\mathcal F}_{v}) \neq 1$.
 
The images of $\psi_{F,S}$, $\psi_{F,S''}$ are in $W_{F,S}$, 
$W_{F,S''}$, respectively. 
The difference between $W_{F,S}$ and $W_{F,S''}$ lies only on 
the $v$-component; the former is 
$\Delta_{F,v}=\bigoplus_{w \vert v} \Delta G_{w}(F/k)$ and 
the latter is $W_{F,v} \simeq \ZZ[\Gal(F/k)]$ which is defined by 
$(x,y) \mapsto y$.
If $(x,y)$ is in $W_{F,v}$, then $x=(1- {\mathcal F}_{v}^{-1})y$
by definition. 
Therefore, the natural map $W_{F,S''} \longrightarrow W_{F,S}$ 
is the multiplication by $1-{\mathcal F}_{v}^{-1}$ on the 
$v$-component and the identity on other components. 
Let $\phi_{v}: \GB_{F} \longrightarrow \GB_{F}$  
be the map which is the multiplication by 
$1-{\mathcal F}_{v}^{-1}$ on the $v$-component and 
the identity on other components.
Then we have 
$$\psi_{F,S}= \phi_{v} \circ \psi_{F,S''}.$$
Since $\det \phi_{v}=1-{\mathcal F}_{v}^{-1}$, we get 
$$\det(\psi_{F,S})^{\chi}=(1-\chi({\mathcal F}_{v})^{-1})
\det(\psi_{F,S''})^{\chi}.$$ 
Therefore, the equality 
$\det(\psi_{F,S})^{\chi}=(\theta^{T}_{F/k,S})^{\chi}$ we obtained 
above implies 
$\det(\psi_{F,S''})^{\chi}=(\theta^{T}_{F/k,S''})^{\chi}$.
Now we have obtained the equality for all $\chi$-components, so 
we get 
$$\det(\psi_{F,S''})=\theta^{T}_{F/k,S''}.$$

By induction on $\#(S' \setminus S)$, 
starting from $S=S'$ and applying the above argument, 
we obtain for any $S$ and any $F$ 
$$\det(\psi_{F,S})=\theta^{T}_{F/k,S}.$$
\end{proof}

It is also easily checked by the argument in the above proof that 
Conjecture \ref{C2} implies the eTNC, namely the existence of 
$z_{K/k,S,T}$.

\vspace{5mm}

We assume Conjecture \ref{C2}, so the existence of 
a basis $(e_{K,v}^{\GA})_{v \in S'}$ of $\GA_{K}$. 
We denote by $(e_{K,v}^{\GA^\circ})_{v \in S'}$ 
the dual basis of $\GA_{K}^{\circ}$. 
We next study the homomorphism
$$
\psi^{\circ}:(W_{K, S_{\infty}}^{\circ} \otimes \QQ_{p})^- 
\longrightarrow (\GA_{K}^{\circ} \otimes \QQ_{p})^- \ .
$$ 
in Proposition \ref{P2}.
We take $(e_{v})_{v \in S'}$ as a basis of 
$W_{S_{\infty}}^{\circ}$ 
as in Theorem \ref{T1}, and $(e_{K,v}^{\GA^{\circ}})_{v \in S'}$
as a basis of $(\GA_{K}^{\circ} \otimes \QQ_{p})^-$ to study
$\det \psi^{\circ} \in \QQ_{p}[G]^-$.

\begin{Theorem} \label{theorem2}
We assume Conjecture \ref{C2}.\\
{\rm (1)} We have 
$$
\det \psi^{\circ} = (\omega^T)^{\#} 
\prod_{v \in S' \setminus S_{\infty}} h_{v}
$$
where $\det \psi^{\circ}$ is taken with respect to the bases 
$(e_{v})_{v \in S'}$ and $(e_{K,v}^{\GA^{\circ}})_{v \in S'}$, 
and $h_{v}$ was defined in (\ref{hv}).\\
{\rm (2)} Conjecture \ref{C1} holds, namely
$$
\Fitt_{\ZZ[1/2]}(((Cl_{K}^{T})')^{\vee})=(\Theta^{T}(K)')^{\#}.
$$
\end{Theorem}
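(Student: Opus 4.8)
The plan is to prove both statements together by relating the homomorphism $\psi$ of Proposition~\ref{P2}, whose cokernel is the minus $p$-part of the \emph{full} $T$-ray class group $Cl_{K}^{T}$, to the homomorphism $\psi_{S'}$ of \S\ref{ConstructionofpsiS} for the auxiliary set $S'$, whose determinant is controlled by Conjecture~\ref{C2}. Once $\det\psi^{\circ}$ has been evaluated, part~(2) is a formal consequence of Theorem~\ref{T1} --- equivalently, of the reformulation (\ref{Conjecture2}).

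The key point is that $\psi$ and $\psi_{S'}$ are built from the \emph{same} module $A=\Ker(V_{S'}^{T}\to\GO)$ and from the two quotients $W_{S_{\infty}}=V_{S'}^{T}/J_{K,S_{\infty}}^{T}$ and $W_{S'}=V_{S'}^{T}/J_{K,S'}^{T}$ of $V_{S'}^{T}$. Since $J_{K,S_{\infty}}^{T}\subseteq J_{K,S'}^{T}$ there is a natural surjection $W_{S_{\infty}}\twoheadrightarrow W_{S'}$, and composing it with $W_{S'}\hookrightarrow\GB$ gives a $\ZZ[G]$-homomorphism $c\colon W_{S_{\infty}}\to\GB$. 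Unwinding the two constructions, with the common normalization of the $\ZZ[G]$-summand of $\GA$ through $v_{\infty}$ and of the archimedean basis elements, one gets $(\psi_{S'})^{-}_{\ZZ_{p}}=c^{-}_{\ZZ_{p}}\circ\psi$; dualizing, $(\psi_{S'}^{\circ})^{-}_{\ZZ_{p}}=\psi^{\circ}\circ(c^{\circ})^{-}_{\ZZ_{p}}$, so $\det\psi_{S'}^{\circ}=\det\psi^{\circ}\cdot\det c^{\circ}$ once bases are fixed compatibly. By Conjecture~\ref{C2} (applied with $F=K$, $S=S'$) and the formula $\det(\varphi^{\circ})=(\det\varphi)^{\#}$, one has $\det\psi_{S'}^{\circ}=(\theta_{K,S'}^{T})^{\#}$ with respect to the canonical basis of $\GB^{\circ}$ and the dual $(e_{K,v}^{\GA^{\circ}})$ of the good basis. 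Hence it remains to compute $\det c^{\circ}$ with respect to that basis of $\GB^{\circ}$ and the basis $(e_{v})_{v\in S'}$ of $(W_{S_{\infty}}^{\circ}\otimes\QQ_{p})^{-}$.

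This last computation is done place by place from the explicit presentation of $W_{K_{w}}^{\circ}$ and the elements $c_{w}((1,0)),\,c_{w}((0,1))$ of \S\ref{CM-extensionsSubsection}. At an infinite $v$ the $v$-component of $c$ is $\Delta G_{w}\hookrightarrow\ZZ[G_{w}]$, whose cokernel is $\ZZ$ with trivial action, so on minus parts its dual is an isomorphism and contributes the unit $1$ for our normalization of $e_{v}$. At a finite $v\in S'\setminus S_{\infty}$ the $v$-component of $c$ is $W_{K_{w}}\to\Delta G_{w}\hookrightarrow\ZZ[G_{w}]$, and dualizing in the quotient presentation of $W_{K_{w}}^{\circ}$ shows that the $v$-th canonical basis vector of $\GB^{\circ}$ is sent to $c_{w}((1,0))=c_{w}((1,1))-c_{w}((0,1))=(1-g_{v}^{-1}N_{I_{v}})e_{v}$ by (\ref{BasisIsom2}); thus the $v$-component of $\det c^{\circ}$ is $1-g_{v}^{-1}N_{I_{v}}$. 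A short computation with the idempotent $N_{I_{v}}/\#I_{v}$ (in the spirit of (\ref{BasisIsom4}) and Lemma~8.3 of \cite{Grei4}) gives the identity $h_{v}\,(1-g_{v}^{-1}N_{I_{v}})=1-\frac{N_{I_{v}}}{\#I_{v}}{\mathcal F}_{v}$, while comparing the $S'$-imprimitive with the primitive $L$-values gives $\theta_{K,S'}^{T}=\omega^{T}\prod_{v\in S'\setminus S_{\infty}}\delta_{v}$ with $\delta_{v}^{\#}=1-\frac{N_{I_{v}}}{\#I_{v}}{\mathcal F}_{v}$. Putting these together, $\det\psi^{\circ}=(\theta_{K,S'}^{T})^{\#}/\det c^{\circ}=(\omega^{T})^{\#}\prod_{v\in S'\setminus S_{\infty}}\delta_{v}^{\#}\,(1-g_{v}^{-1}N_{I_{v}})^{-1}=(\omega^{T})^{\#}\prod_{v\in S'\setminus S_{\infty}}h_{v}$, which is part~(1). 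Feeding this into Theorem~\ref{T1}, the factors $\prod_{v\in S'\setminus S_{\infty}}h_{v}^{\pm1}$ cancel (each $h_{v}$ being a non-zerodivisor of $\QQ[G]$) and one is left with $\Fitt_{\ZZ_{p}[G]^{-}}(((Cl_{K}^{T})^{\vee})^{-}_{\ZZ_{p}})=((\prod_{v\in S_{{\rm ram}}(K/k)}U_{v}^{\#})(\omega^{T})^{\#})^{-}_{\ZZ_{p}}=(\Theta^{T}(K)^{\#})^{-}_{\ZZ_{p}}$ for every odd $p$. Since $\Theta^{T}(K)$ and $Cl_{K}^{T}$ do not depend on $S,S'$ (Proposition~\ref{P3}) and an ideal of $R=\ZZ[1/2][G]^{-}$ is determined by its $\ZZ_{p}$-localizations for odd $p$, this gives $\Fitt_{R}(((Cl_{K}^{T})')^{\vee})=(\Theta^{T}(K)')^{\#}$, i.e.\ Conjecture~\ref{C1}.

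The step I expect to be the main obstacle is the clean identification $(\psi_{S'})^{-}_{\ZZ_{p}}=c^{-}_{\ZZ_{p}}\circ\psi$ together with the verification that the ``unit'' ambiguities in $\det c^{\circ}$ really are trivial: $\psi$ and $\psi_{S'}$ were each only defined as extensions of canonical maps on $A$, so one has to check that the constructions of \S\ref{ConstructionofpsiS} and \S\ref{CM-extensionsSubsection} match on the nose --- including the choices of $v_{\infty}$ and of the $e_{v}$ --- and the archimedean components of $c^{\circ}$ must be pinned down exactly, not just up to units. The algebraic identity $h_{v}(1-g_{v}^{-1}N_{I_{v}})=1-\frac{N_{I_{v}}}{\#I_{v}}{\mathcal F}_{v}$ and the bookkeeping with the local factors $\delta_{v}$ are routine but need care about which primes lie in $S_{{\rm ram}}(K/k)$, $S$, and $S'$.
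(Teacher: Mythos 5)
Your proposal is correct, and it takes a genuinely different route from the paper. The paper works character by character: for each odd $\chi$ it sets $F=K_{\chi}$, $S_{\chi}=S_{\infty}\cup S_{\rm ram}(K_{\chi}/k)$, compares $\psi^{\circ}$ with $\psi_{F,S_{\chi}}^{\circ}$ via a change-of-field map involving norms $N_{H_{w}}$, and obtains the two place-by-place relations (\ref{ComparisonBasis1}) (factor $1$ where $\chi$ ramifies) and (\ref{ComparisonBasis2}) (factor $g_{v}^{-1}$ where $\chi$ is unramified); the factor $\prod h_{v}^{\chi}$ then emerges because $h_{v}^{\chi}=1$ or $g_{v}^{\chi}$ according to ramification. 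You instead make a single, $\chi$-independent comparison: fix $F=K$, $S=S'$, factor $(\psi_{S'})^{-}_{\ZZ_{p}}=c^{-}_{\ZZ_{p}}\circ\psi$ through the natural surjection $W_{S_{\infty}}\twoheadrightarrow W_{S'}\hookrightarrow\GB$, and compute $\det c^{\circ}$ place by place, getting $1-g_{v}^{-1}N_{I_{v}}$ at every finite $v$ via (\ref{cw10}). The price is that you then need the algebraic identity $h_{v}(1-g_{v}^{-1}N_{I_{v}})=1-\frac{N_{I_{v}}}{\#I_{v}}{\mathcal F}_{v}$ (which checks out, using $h_{v}N_{I_{v}}=\frac{N_{I_{v}}}{\#I_{v}}g_{v}N_{I_{v}}$ so $h_{v}g_{v}^{-1}N_{I_{v}}=N_{I_{v}}$, and $h_{v}-N_{I_{v}}=1-\frac{N_{I_{v}}}{\#I_{v}}{\mathcal F}_{v}$) together with the decomposition $\theta_{K,S'}^{T}=\omega^{T}\prod_{v\in S'\setminus S_{\infty}}(1-\frac{N_{I_{v}}}{\#I_{v}}{\mathcal F}_{v}^{-1})$, which is the $J=\emptyset$ case of the Euler-factor formula used in the proof of Proposition \ref{P3} extended to all of $S'$. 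What your approach buys: you only ever invoke the determinant equality from Conjecture~\ref{C2} for the single pair $(F,S)=(K,S')$, rather than for the whole family $(K_{\chi},S_{\chi})$; you avoid the norm maps $N_{H_{w}}$ and the back-and-forth between group rings for subfields; and the place-local computation is uniform (always the first projection $(x,y)\mapsto x$). What the paper's approach buys: the factor $h_{v}^{\chi}$ appears directly without needing the auxiliary identity. The one point you rightly flag — pinning down the archimedean component of $\det c^{\circ}$ exactly, not merely up to a unit of $\ZZ_{p}[G]^{-}$, so that (1) holds as a genuine equality of elements — is a detail the paper's proof also leaves implicit (the archimedean component of $\iota$ is not computed there either), and for the application to (2) a unit ambiguity is harmless; with the paper's normalization of $e_{v}$ at infinite $v$ the contribution is indeed a unit in $\ZZ_{p}$ for $p$ odd.
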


\begin{proof} Theorem \ref{theorem2} (2) is a consequence of 
Theorems \ref{T1} and \ref{theorem2} (1) 
(see also (\ref{Conjecture2})). 
So it suffices to prove Theorem \ref{theorem2} (1).
To do this, we prove 
$$
(\det \psi^{\circ})^{\chi} = ((\omega^T)^{\#} 
\prod_{v \in S' \setminus S_{\infty}} h_{v})^{\chi}
=L_{T}(0,\chi)\prod_{v \in S' \setminus S_{\infty}} h_{v}^{\chi}
$$
for any character $\chi$ of $G$ 
where we denote the $\chi$-component $\epsilon_{\chi}x$ 
by $x^{\chi}$ for any element $x$ in $\QQ_{p}[G]^-$.

We use the notation in \S \ref{CM-extensionsSubsection}.
Suppose that $v$ is a finite prime in $S'$, and $w$ is the 
prime we fixed above $v$.
It follows from (\ref{BasisIsom2}) that 
$c_{w}((0,1))=g_{v}^{-1}N_{I_{v}}c_{w}((1,1))$ and
\begin{equation} \label{cw10}
c_{w}((1,0))=c_{w}((1,1))-c_{w}((0,1))=
(1-g_{v}^{-1}N_{I_{v}})c_{w}((1,1)).
\end{equation}

Let $K_{\chi}/k$ be the intermediate field of $K/k$ corresponding 
to $\Ker \chi$.
We put $F=K_{\chi}$, $S_{\chi}=S_{\infty} \cup 
S_{{\rm ram}}(F/k)=S_{\infty} \cup 
S_{{\rm ram}}(K_{\chi}/k)$, 
and consider 
$\psi_{F, S_{\chi}}:\GA_{F} \longrightarrow \GB_{F}$ and 
its dual $\psi_{F, S_{\chi}}^{\circ}:
\GB_{F}^{\circ} \longrightarrow \GA_{F}^{\circ}$
with which we compare 
$\psi^{\circ} 
:(W_{K, S_{\infty}}^{\circ} \otimes \QQ_{p})^- 
\longrightarrow (\GA_{K}^{\circ} \otimes \QQ_{p})^-$. 
We compute the image of the natural homomorphism 
$$\iota: \GB_{F}^{\circ} \stackrel{\alpha}{\longrightarrow} 
W_{F,S_{\infty}}^{\circ} \otimes \QQ
\stackrel{\beta}{\longrightarrow} 
W_{K,S_{\infty}}^{\circ} \otimes \QQ$$
with the dual basis $(e_{K,v}^{\GB^\circ})_{v \in S'}$ of  
$\GB_{F}^{\circ}$, obtained from 
$(e_{F,v}^{\GB})_{v \in S'}$ and 
the basis $(e_{v})_{v \in S'}$ of 
$W_{K,S_{\infty}}^{\circ} \otimes \QQ$.
(Note that since $\GB_{F}$ was constructed from $W_{F,S}$, 
it depends on $S$ though the notation does not carry $S$. 
In our case above, $S=S_{\chi}$.)

We denote by $w'$ the prime of $F$ below $w$
that is the prime we fixed of $K$ above $v$. 
Put $H_{w}=\Gal(K_{w}/F_{w'})$, and 
$G_{w'}=\Gal(F_{w'}/k)$, then since 
$G_{w}=\Gal(K_{w}/k_{v})$, we have 
$G_{w'}=G_{w}/H_{w}$. 

Suppose at first $v$ is in $S_{\chi}=S_{{\rm ram}}(F/k)$.
We will prove 
\begin{equation} \label{equationramifiedcase}
\iota(e_{F,v}^{\GB^\circ})
=N_{H_{w}}(1-g_{v}^{-1}N_{I_{v}})e_{v}.
\end{equation}
Since $w'$ is ramified, the $w'$-component of
the natural map $W_{F,S_{\chi}} \longrightarrow \GB_{F}$ is 
$W_{F_{w'}} \longrightarrow \ZZ[G_{w}]; (x,y) \mapsto x$. 
Let 
$$c_{w'}: \QQ[G_{w'}] \oplus \QQ[G_{w'}] 
\longrightarrow W_{F_{w'}}^{\circ} \otimes \QQ$$
be the homomorphism  
obtained by applying the definition of $c_{w}$ 
in \S \ref{CM-extensionsSubsection} 
to $w'$.
We consider the natural map 
$W_{F,S_{\infty}} \longrightarrow \GB_{F}$
and its dual $\alpha: \GB_{F}^{\circ} \longrightarrow 
W_{F,S_{\infty}}^{\circ} 
\subset W_{F,S_{\infty}}^{\circ} \otimes \QQ$.
Then the $w'$-component $\alpha_{w'}$ of $\alpha$,
$$\alpha_{w'}: \ZZ[G_{w'}] \longrightarrow 
W_{F_{w'}}^{\circ} \otimes \QQ$$ 
is described as $\alpha_{w'}(1)= c_{w'}((1,0))$ 
by what we explained above and the definitions 
of the modules. 
Since the diagram 
$$
\begin{array}{ccc}
\ZZ[G_{w'}] & \stackrel{\alpha_{w'}}{\longrightarrow} &
W_{F_{w'}}^{\circ} \otimes \QQ \\
\mapdown{N_{H_{w}}}&&\mapdown{}\\
\ZZ[G_{w}] & \stackrel{p_1}{\longrightarrow} &
W_{K_{w}}^{\circ} \otimes \QQ \\
\end{array}
$$
is commutative where the bottom map $p_{1}$ is 
$p_{1}(x)=c_{w}((x,0))$, 
the $w'$-component of 
$\iota= \beta \circ \alpha$ can be described as 
$$\ZZ[G_{w'}] \longrightarrow 
W_{F_{w}}^{\circ} \otimes \QQ; \ 
1 \mapsto N_{H_{w}}c_{w}((1,0))=
N_{H_{w}}(1-g_{v}^{-1}N_{I_{v}})c_{w}((1,1))$$
where we used (\ref{cw10}) to get the last equality.
This shows that  
$$\iota(e_{F,v}^{\GB^\circ})
=N_{H_{w}}(1-g_{v}^{-1}N_{I_{v}})e_{v},$$
which completes the proof of (\ref{equationramifiedcase}).

Since $v$ is ramified, taking the $\chi$-component 
(multiplying (\ref{equationramifiedcase}) by 
$\epsilon_{\chi}$), we get 
\begin{equation} \label{ComparisonBasis1}
\iota(e_{F,v}^{\GB^\circ} \epsilon_{F, \chi})
=e_{v} \epsilon_{\chi}
\end{equation}
where $\epsilon_{F, \chi}=\#\Gal(F/k)^{-1}
\sum_{\sigma \in \Gal(F/k)} \chi(\sigma) \sigma^{-1}$ is the 
idempotent of the $\chi$-component of 
the group ring for $\Gal(F/k)$.

\vspace{5mm}

Next, suppose that $v$ is unramified in $F=K_{\chi}$.
This time $v$ is not in $S_{\chi}$, so 
the $w'$-component of
$W_{F,S_{\chi}} \longrightarrow \GB_{F}$ is 
$W_{F_{w'}} \longrightarrow \ZZ[G_{w}]; (x,y) \mapsto y$.
Therefore, $\alpha_{w'}: \ZZ[G_{w'}] \longrightarrow 
W_{F_{w'}}^{\circ} \otimes \QQ$ 
is described as 
$$\alpha_{w'}(1)= c_{w'}((0,1)).$$ 
Since $v$ is unramified, $I_{v}$ is in $H_{w}$. 
We note that the map $x \mapsto c_{w}((0,1))$ 
factors through $\ZZ[G_{w}/I_{w}]$.
We denote this map 
$\ZZ[G_{w}/I_{w}] \longrightarrow W_{K_{w}}^{\circ} \otimes \QQ$
by $p_{2}$. 
Then the diagram
$$
\begin{array}{ccc}
\ZZ[G_{w'}] & \stackrel{\alpha_{w'}}{\longrightarrow} &
W_{F_{w'}}^{\circ} \otimes \QQ \\
\mapdown{N_{H_{w}/I_{v}}}&&\mapdown{}\\
\ZZ[G_{w}/I_{v}] & \stackrel{p_{2}}{\longrightarrow} &
W_{K_{w}}^{\circ} \otimes \QQ \\
\end{array}
$$
is commutative. 
Thus the $w'$-component of 
$\iota=\beta \circ \alpha$, 
$\ZZ[G_{w'}] \longrightarrow 
W_{F_{w}}^{\circ} \otimes \QQ$
is described as 
$$1 \mapsto N_{H_{w}/I_{v}}c_{w}((0,1))=
N_{H_{w}/I_{v}}N_{I_{v}}g_{v}^{-1}c_{w}((1,1))
=N_{H_{w}}g_{v}^{-1}c_{w}((1,1))$$
where we used (\ref{BasisIsom2}) to get the first equality.
This implies that 
\begin{equation} \label{equationunramifiedcase}
\iota(e_{F,v}^{\GB^\circ})
=N_{H_{w}}g_{v}^{-1}e_{v}.
\end{equation}
Multiplying $\epsilon_{\chi}$, we now get 
\begin{equation} \label{ComparisonBasis2}
\iota(e_{F,v}^{\GB^\circ} \epsilon_{F, \chi})
=g_{v}^{-1}e_{v} \epsilon_{\chi}.
\end{equation}

Recall that $\det \psi$, $\det \psi_{F,S_{\chi}}$ are 
computed by using the basis 
$(e_{v})_{v \in S'}$, $(e_{F,v}^{\GB^\circ})_{v \in S'}$, 
respectively.
Therefore, it follows from (\ref{ComparisonBasis1}) 
and (\ref{ComparisonBasis2}) that 
$$
\det(\psi_{F,S_{\chi}}^{\circ})^{\chi}=(
(\prod_{S' \setminus (S_{\infty} \cup S_{\chi})} g_{v}^{-1})
\det(\psi^{\circ}))^{\chi}
=((\prod_{S' \setminus S_{\infty}} h_{v}^{-1})
\det(\psi^{\circ}))^{\chi}.
$$
To get the last equality, we used (\ref{hv}).
Using Conjecture \ref{C2}, we obtain
\begin{eqnarray*}
\det(\psi^{\circ})^{\chi} &= &
((\prod_{S' \setminus S_{\infty}} h_{v}) 
\det(\psi_{F,S_{\chi}}^{\circ}))^{\chi}
= 
((\prod_{S' \setminus S_{\infty}} h_{v}) 
(\theta_{F, S_{\chi}}^{T})^{\#})^{\chi} \\
&= &L_{T}(0,\chi)\prod_{v \in S' \setminus S_{\infty}} h_{v}^{\chi}.
\end{eqnarray*}
This holds for all characters $\chi$ of $G$, 
so we get the desired equality in Theorem \ref{theorem2} (1). 
\end{proof}

\begin{Corollary} \label{MainCorollary}
The equivariant Tamagawa number conjecture for $K/k$ implies 
Conjecture \ref{C1}.
\end{Corollary}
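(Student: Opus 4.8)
The plan is essentially a concatenation of two results already in place in this section. First I would invoke Proposition \ref{eTNCimpliesC2}, which shows that the equivariant Tamagawa number conjecture for $K/k$ implies Conjecture \ref{C2}. Then I would invoke Theorem \ref{theorem2}~(2), which shows that Conjecture \ref{C2} implies Conjecture \ref{C1}. Composing the two implications yields exactly the statement of Corollary \ref{MainCorollary}, so no new argument is needed; the content of the corollary is entirely contained in those two earlier results.

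It is worth recording the route so that it is transparent that nothing is circular. The first step reads eTNC in the form of Conjecture~3.6 of \cite{BKS1}: one takes the zeta element $z_{K/k,S,T}$, a $\ZZ[G]$-basis of $\det_{G}R\Gamma_{T}(\OO_{K,S},\mathbb{G}_{m})$ whose image under the regulator isomorphism is the leading term $\theta^{T\,*}_{K/k,S}(0)$; since $\GA_{K}\stackrel{\psi_{S}}{\longrightarrow}\GB_{K}$ represents this complex and the basis of $\GB_{K}$ has been fixed, $z_{K/k,S,T}$ determines the distinguished basis $(e_{K,v}^{\GA})_{v\in S'}$ of $\GA_{K}$, and the functoriality of zeta elements under change of the base field and under shrinking $S$ gives $\det(\psi_{F,S})=\theta^{T}_{F/k,S}$ for every intermediate field $F$ and every admissible $S$ — that is, Conjecture \ref{C2}. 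Granting Conjecture \ref{C2}, Theorem \ref{theorem2}~(1) evaluates $\det\psi^{\circ}=(\omega^{T})^{\#}\prod_{v\in S'\setminus S_{\infty}}h_{v}$ with respect to the bases $(e_{v})_{v\in S'}$ and $(e_{K,v}^{\GA^{\circ}})_{v\in S'}$, and substituting this into Theorem \ref{T1} — equivalently, verifying the reformulation (\ref{Conjecture2}) — yields $\Fitt_{R}(((Cl_{K}^{T})')^{\vee})=(\Theta^{T}(K)')^{\#}$ with $R=\ZZ[1/2][G]^{-}$, which is precisely Conjecture \ref{C1}.

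I do not expect any genuine obstacle here. The substantive work has already been carried out in proving Proposition \ref{eTNCimpliesC2} (the hard, eTNC-dependent step) and Theorem \ref{theorem2} (the linear-algebra comparison of $\det\psi^{\circ}$ with the Stickelberger element), and both are stated for the same data $(K/k,G,T,S')$, so the two implications compose with no compatibility to verify. The only care required is the routine bookkeeping of matching their hypotheses, after which the proof is a one-line deduction.
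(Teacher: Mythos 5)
Your argument matches the paper's own proof exactly: cite Proposition \ref{eTNCimpliesC2} for eTNC $\Rightarrow$ Conjecture \ref{C2}, then Theorem \ref{theorem2}~(2) for Conjecture \ref{C2} $\Rightarrow$ Conjecture \ref{C1}, and compose. The extra paragraph you include is just a summary of the content of those two results and is not needed for the corollary itself, but it is accurate.
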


\begin{proof}
This follows from Theorem \ref{theorem2} (2) and 
Proposition \ref{eTNCimpliesC2}.
\end{proof}


\section{Cyclotomic $\ZZ_{p}$-extensions} \label{sec4}

Let $K_{\infty}/K$ be the cyclotomic $\ZZ_{p}$-extension and 
$K_{n}$ the $n$-th layer. 
Put $\Lambda=\ZZ_{p}[[\Gal(K_{\infty}/k)]]$. 
We first take the projective limit of the sequence (\ref{e4}) in Proposition \ref{P2}. 

We denote by $S_{\rm ram}=S_{\rm ram}(K_{\infty}/k)$ 
the set of all finite primes of $k$ 
ramifying in $K_{\infty}$. 
The set $S_{p}$ of all primes above $p$ is contained in $S_{\rm ram}$. 
We put $S^{{\rm non} \ p}_{\rm ram}=S_{\rm ram} \setminus S_{p}$.  
We take $S'$ which satisfies the conditions in 
\S \ref{ConstructionofpsiS} 
for $K/k$ and which satisfies $S' \supset S_{\rm ram}$.

We consider $W_{K_{n}, S_{\infty}}$ which is $W_{S_{\infty}}$
in \S \ref{CM-extensionsSubsection} for $K_{n}$. 
Let $w$ be a prime of $K_{\infty}$. 
We also denote by $w$ the prime of $K_{n}$ below $w$ and 
consider $W_{K_{n,w}}$.
We define 
$$
W(K_{\infty}/k)^{\circ}_{\ZZ_{p}} = {\lim\limits_{\leftarrow}} 
(W_{K_{n},S_{\infty}}^\circ \otimes \ZZ_{p}),
$$
$$
W_{w}(K_{\infty}/k)^{\circ}_{\ZZ_{p}} = {\lim\limits_{\leftarrow}} 
(W_{K_{n,w}}^\circ \otimes \ZZ_{p})
$$
and 
$W_{v}(K_{\infty}/k)^\circ_{\ZZ_{p}}=\bigoplus_{w \mid v} 
W_{w}(K_{\infty}/k)^\circ_{\ZZ_{p}}$ 
for a finite prime $v$ of $k$.

We first consider a prime $w$ above $p$.
Suppose that $n$ is sufficiently large such that 
$K_{\infty}/K_{n}$ is totally ramified at all primes above $p$.
Consider the canonical exact sequences for $W_{K_{n,w}}$ 
and $W_{K_{n+1},w}$ (see (1.4) in \cite{GruW}). 
Then we have a commutative diagram of exact sequences
$$
\begin{array}{ccccccccc}
0 & \longrightarrow & \ZZ & 
\longrightarrow & W_{K_{n,w}} & \longrightarrow & \Delta D_{w}(K_{n}/k)
& \longrightarrow & 0 \\
&& \mapdown{\xi} && \mapdown{} && \mapdown{\nu} && \\
0 & \longrightarrow & \ZZ & 
\longrightarrow & W_{K_{n+1,w}} & \longrightarrow & \Delta D_{w}(K_{n+1}/k)
& \longrightarrow & 0 \\
\end{array}
$$
where $D_{w}(K_{n}/k)$ is the decomposition subgroup of $w$ 
in $\Gal(K_{n}/k)$, 
$\xi$ is the multiplication by $p$ and $\nu$ is the norm map.
The above commutative diagram shows that for a prime $w$ of $K_{\infty}$ above $p$, 
$$W_{w}(K_{\infty}/k)^{\circ}_{\ZZ_{p}}={\lim\limits_{\leftarrow}} 
\ZZ_{p}[D_{w}(K_{n}/k)]/(N_{D_{w}(K_{n}/k)}) 
= \ZZ_{p}[[D_{w}(K_{\infty}/k)]].$$
Therefore, $W_{v}(K_{\infty}/k)^\circ_{\ZZ_{p}}
=\bigoplus_{w \mid v} 
W_{w}(K_{\infty}/k)^\circ_{\ZZ_{p}}$ 
is a free $\Lambda$-module 
of rank $1$ for $v \in S_{p}$. 

We use the notation in \S \ref{CM-extensionsSubsection}.
Let $c_{w}:\ZZ[G_{w}(K_{n}/k)] \oplus \ZZ[G_{w}(K_{n}/k)]
\longrightarrow W_{K_{n},w}^{\circ}$ 
be the map obtained by applying to $K_{n}/k$  
the definition for $K/k$ before Proposition \ref{P2} in 
\S \ref{CM-extensionsSubsection}.
Taking the projective limit, we have a map 
$$c_{w}: \ZZ_{p}[[D_{w}(K_{\infty}/k)]] \oplus 
\ZZ_{p}[[D_{w}(K_{\infty}/k)]] \longrightarrow 
W_{w}(K_{\infty}/k)^{\circ}_{\ZZ_{p}}.$$
What we have shown in the previous paragraph, 
means that $c_{w}((1,0))$ generates 
$W_{w}(K_{\infty}/k)^{\circ}_{\ZZ_{p}}$,
namely 
$W_{w}(K_{\infty}/k)^{\circ}_{\ZZ_{p}}=
\ZZ_{p}[[D_{w}(K_{\infty}/k)]] c_{w}((1,0))$.

Fixing $w$ above $v$, we have a map
$$c_{v}: \ZZ_{p}[[\Gal(K_{\infty}/k)]] \oplus 
\ZZ_{p}[[\Gal(K_{\infty}/k)]] = \Lambda \oplus \Lambda 
\longrightarrow 
W_{v}(K_{\infty}/k)^{\circ}_{\ZZ_{p}}.$$
We put $e_{v}'=c_{v}((1,0))$. 
Then we have 
$W_{v}(K_{\infty}/k)^{\circ}_{\ZZ_{p}}=\Lambda e_{v}'$.

\vspace{5mm}

Next, suppose that $v$ is a non $p$-adic finite prime.
Note that the inertia group $I_{v}(K_{\infty}/k)$ of 
$\Gal(K_{\infty}/k)$ coincides with the inertia group
$I_{v}(K/k)$ of $\Gal(K/k)$. 
We denote it by $I_{v}$. 

We define $c_{v}$ as above and also define 
$e_{v}'=c_{v}((1,0))$.
Then the map $\Lambda \longrightarrow 
W_{v}(K_{\infty}/k)^{\circ}_{\ZZ_{p}}$, 
$a \mapsto ae_{v}'$ is injective because
${\mathcal F}_{v}-1$ is a nonzero divisor in $\Lambda$. 

Let ${\mathcal R}$ be the total quotient ring of $\Lambda$. 
Then ${\mathcal R} \longrightarrow 
W_{v}(K_{\infty}/k)^{\circ}_{\ZZ_{p}}\otimes {\mathcal R}$ 
which is defined by $a \mapsto ae_{v}'$ 
is bijective.
Since $W_{v}(K_{\infty}/k)^{\circ}_{\ZZ_{p}}$ is generated by 
$c_{v}((1,0))$ and $c_{v}((0,1))$, 
we have 
\begin{equation} \label{ev'isomorphism}
W_{v}(K_{\infty}/k)^{\circ}_{\ZZ_{p}}
=(1, \frac{N_{I_{v}}}{1-{\mathcal F}_{v}}) \Lambda e_{v}'.
\end{equation}

We now suppose that $v$ is an infinite prime. 
Then the $v$-component of $W_{K_{n}}$ is canonically 
isomorphic to 
$\ZZ[\Gal(K_{n}/k)]$ (after fixing a prime $w$ above $v$).  
We took a generator $e_{v}$ of $(W_{K_{n},v}^{\circ})_{\ZZ_{p}}^-$ 
in \S \ref{CM-extensionsSubsection}.
We define $e_{v}'$ to be the projective limit of $e_{v}$ 
as $n \rightarrow \infty$.  
So in this case we have 
$W_{v}(K_{\infty}/k)^{\circ}_{\ZZ_{p}}=\Lambda e_{v}'$.
Thus if $v$ is $p$-adic or infinite, 
$$W_{v}(K_{\infty}/k)^{\circ}_{\ZZ_{p}}=\Lambda e_{v}'$$
holds.

We regard $e_{v}'$ as an element of $W(K_{\infty}/k)^{\circ}_{\ZZ_{p}}$
(by defining that 
the $v'$-component of $e_{v}'$ is zero for all $v' \neq v$). 
Then $(e_{v}')_{v \in S'}$ is a basis of a free 
${\mathcal R}$-module 
$W(K_{\infty}/k)^{\circ}_{\ZZ_{p}} \otimes {\mathcal R}$.

\vspace{5mm}

We first note that 
$\Coker(V_{K_{n},S'}^{T} \longrightarrow \GO_{K_{n}}) 
\otimes \ZZ_{p}=0$ for any $n \geq 0$ 
where $V_{K_{n},S'}^{T}$, $\GO_{K_{n}}$ are 
$V_{S'}^{T}$, $\GO$ for $K_{n}$. 
This can be checked as follows. 
Put $G_{n}=\Gal(K_{n}/K)$. 
Since the natural maps induce isomorphisms 
$(V_{K_{n},S'}^{T})_{G_{n}} \simeq V_{K,S'}^{T}$ and 
$(\GO_{K_{n}})_{G_{n}} \simeq \GO_{K}$, 
the surjectivity of $V_{K,S'}^{T} \longrightarrow \GO_{K}$ 
implies $\Coker(V_{K_{n},S'}^{T} \longrightarrow \GO_{K_{n}})_{G_{n}}
=0$. 
Therefore, Nakayama's lemma implies 
$\Coker(V_{K_{n},S'}^{T} \longrightarrow \GO_{K_{n}}) 
\otimes \ZZ_{p}=0$.

Thus we have exact sequences (\ref{e4}) in Proposition \ref{P2} 
for any $K_{n}$, 
and can take the projective limit.

Consider $\GA^{\circ}_{K_{n}}$ 
which is $\GA^{\circ}$ for $K_{n}$, and define 
$$\GA^{\circ}(K_{\infty}/k)_{\ZZ_{p}} = {\lim\limits_{\leftarrow}} 
(\GA^{\circ}_{K_{n}} \otimes \ZZ_{p}).$$
The minus part $\GA^{\circ}(K_{\infty}/k)^-_{\ZZ_{p}}$ is a free 
$\Lambda^-$-module of finite rank.
We put 
$$Cl_{K_{\infty},p}^{T}={\lim\limits_{\rightarrow}} (Cl_{K_{n}}^{T}
\otimes \ZZ_{p}).$$
Taking the projective limit of the exact sequence (\ref{e4}), 
we have an exact sequence
\begin{equation} \label{e5}
0 \longrightarrow  (W(K_{\infty}/k)^{\circ}_{\ZZ_{p}})^-
\longrightarrow \GA^{\circ}(K_{\infty}/k)^-_{\ZZ_{p}} 
\longrightarrow ((Cl_{K_{\infty},p}^{T})^{\vee})^- \longrightarrow 0.
\end{equation}

Let $W'$ be the $\Lambda$-submodule of 
$(W(K_{\infty}/k)^{\circ}_{\ZZ_{p}})^-$
generated 
by $e_{v}'$ for all $v \in S'$. 
Then $W'$ is a free $\Lambda^-$-module.  
We write $f$ for the restriction of the homomorphism 
$(W(K_{\infty}/k)^{\circ}_{\ZZ_{p}})^-
\longrightarrow \GA^{\circ}(K_{\infty}/k)^-_{\ZZ_{p}}$
to $W'$. 
We consider $\det f$ with respect to the basis 
$(e_{v}')_{v \in S'}$. 
So $\det f$ is determined up to $\Lambda^{\times}$.

\begin{Lemma} \label{ML}
Suppose that $f:W' \longrightarrow 
\GA^{\circ}(K_{\infty}/k)^-_{\ZZ_{p}}$
is the homomorphism defined above, and 
we take $\det f$ with respect to the basis 
$(e_{v}')_{v \in S'}$. 
Then we have 
$$\Fitt_{\Lambda^-}(((Cl_{K_{\infty},p}^{T})^{\vee})^-)
= (\prod_{v \in S' \setminus (S_{\infty} \cup S_{p})}
(1, \frac{N_{I_{v}}}{1-{\mathcal F}_{v}})) \det f
$$
where $I_{v}=I_{v}(K/k)$ for each $v$. 
\end{Lemma}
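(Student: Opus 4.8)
The plan is to read a finite $\Lambda^-$-presentation of $((Cl_{K_{\infty},p}^{T})^{\vee})^-$ off the exact sequence (\ref{e5}), and then compute its zeroth Fitting ideal from the minors of the presenting matrix. By (\ref{e5}) in Proposition \ref{P2}, $((Cl_{K_{\infty},p}^{T})^{\vee})^-$ is the cokernel of the injection $g\colon (W(K_{\infty}/k)^{\circ}_{\ZZ_{p}})^-\hookrightarrow\GA^{\circ}(K_{\infty}/k)^-_{\ZZ_{p}}$, whose target is free of rank $\#S'$ over $\Lambda^-$. Fixing the $\Lambda^-$-basis of $\GA^{\circ}(K_{\infty}/k)^-_{\ZZ_{p}}$ used to define $\det f$, the restriction $f=g|_{W'}$ is given by the square matrix $F$ whose column indexed by $v\in S'$ is $F_{v}:=g(e_{v}')$, and $\det F=\det f$.

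The description of $(W(K_{\infty}/k)^{\circ}_{\ZZ_{p}})^-$ established just before the Lemma — namely $(W_{v}(K_{\infty}/k)^{\circ}_{\ZZ_{p}})^-=\Lambda^-e_{v}'$ for $v\in S_{\infty}\cup S_{p}$, and $(W_{v}(K_{\infty}/k)^{\circ}_{\ZZ_{p}})^-=\Lambda^-e_{v}'+\Lambda^-\frac{N_{I_{v}}}{1-{\mathcal F}_{v}}e_{v}'$ for the remaining $v$ by (\ref{ev'isomorphism}) — shows that $(W(K_{\infty}/k)^{\circ}_{\ZZ_{p}})^-$ is generated over $\Lambda^-$ by the $e_{v}'$ $(v\in S')$ together with the elements $\frac{N_{I_{v}}}{1-{\mathcal F}_{v}}e_{v}'$ for $v\in S'\setminus(S_{\infty}\cup S_{p})$. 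Hence $\image g$ is generated over $\Lambda^-$ by the columns $F_{v}$ $(v\in S')$ and the vectors $\frac{N_{I_{v}}}{1-{\mathcal F}_{v}}F_{v}$ $(v\in S'\setminus(S_{\infty}\cup S_{p}))$. Here one must check that each $\frac{N_{I_{v}}}{1-{\mathcal F}_{v}}F_{v}$, a priori only an element of $\GA^{\circ}(K_{\infty}/k)^-_{\ZZ_{p}}\otimes{\mathcal R}$, actually lies in $\GA^{\circ}(K_{\infty}/k)^-_{\ZZ_{p}}$ — which is true because it is $g$ applied to $\frac{N_{I_{v}}}{1-{\mathcal F}_{v}}e_{v}'\in(W(K_{\infty}/k)^{\circ}_{\ZZ_{p}})^-$, so it is exactly here that the exactness of (\ref{e5}) is used beyond the structure of the local modules.

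Thus $((Cl_{K_{\infty},p}^{T})^{\vee})^-=\GA^{\circ}(K_{\infty}/k)^-_{\ZZ_{p}}/\image g$ is presented by the $\#S'\times(\#S'+r)$ matrix $M$, $r=\#(S'\setminus(S_{\infty}\cup S_{p}))$, obtained from $F$ by adjoining the columns $\frac{N_{I_{v}}}{1-{\mathcal F}_{v}}F_{v}$, and $\Fitt_{\Lambda^-}(((Cl_{K_{\infty},p}^{T})^{\vee})^-)$ is generated by the $\#S'$-minors of $M$. The decisive observation is that a $\#S'\times\#S'$ submatrix containing, for some $v$, both $F_{v}$ and $\frac{N_{I_{v}}}{1-{\mathcal F}_{v}}F_{v}$ has two ${\mathcal R}$-proportional columns, hence vanishing determinant; consequently a nonzero $\#S'$-minor must use every $F_{v}$ with $v\in S_{\infty}\cup S_{p}$ and exactly one of $F_{v}$, $\frac{N_{I_{v}}}{1-{\mathcal F}_{v}}F_{v}$ for each $v\in S'\setminus(S_{\infty}\cup S_{p})$, and choosing the latter precisely on a subset $J$ produces the minor $\big(\prod_{v\in J}\frac{N_{I_{v}}}{1-{\mathcal F}_{v}}\big)\det F$. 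Since $\prod_{v}(1,a_{v})$ is by definition the $\Lambda^-$-submodule of ${\mathcal R}$ generated by the products $\prod_{v\in J}a_{v}$ over all subsets $J$ of a finite index set, this gives
$$\Fitt_{\Lambda^-}\big(((Cl_{K_{\infty},p}^{T})^{\vee})^-\big)=\Big(\prod_{v\in S'\setminus(S_{\infty}\cup S_{p})}\big(1,\tfrac{N_{I_{v}}}{1-{\mathcal F}_{v}}\big)\Big)\det f ,$$
which is the claimed formula (with $I_{v}=I_{v}(K/k)$, as inertia is unchanged in $K_{\infty}/K$).

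Two points remain to be settled, and neither looks serious. First, $\det f$ must be a non-zero-divisor of $\Lambda^-$, i.e.\ $g\otimes_{\Lambda^-}{\mathcal R}$ is an isomorphism and $((Cl_{K_{\infty},p}^{T})^{\vee})^-$ is $\Lambda^-$-torsion; this is the classical fact that $Cl_{K_{\infty},p}^{T}$ is a cofinitely generated, cotorsion $\Lambda$-module (each $Cl_{K_{n}}^{T}$ being finite). Second, one has to verify that (\ref{ev'isomorphism}) and the generators $e_{v}'$ pass correctly to the minus part; together with the integrality assertion above, this is where I expect the only real care to be needed, but it rests entirely on the local computations already recorded before the Lemma, so it should be bookkeeping rather than a genuine obstacle.
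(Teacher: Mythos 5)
Your proof is correct and follows essentially the same path as the paper: the paper records the decomposition of $(W(K_{\infty}/k)^{\circ}_{\ZZ_{p}})^-$ as a direct sum of fractional ideals $(1,\tfrac{N_{I_{v}}}{1-{\mathcal F}_{v}})\Lambda^- e_{v}'$ and $\Lambda^- e_{v}'$, and then asserts the Fitting-ideal formula ``follows from (\ref{e5})''; your minor computation (adjoining the columns $\tfrac{N_{I_{v}}}{1-{\mathcal F}_{v}}F_{v}$, noting that minors repeating an index vanish, and summing over subsets $J$) is exactly the standard argument that justifies that step. The two side remarks you flag (that $\det f$ is a non-zero-divisor, and that inertia is unchanged in $K_{\infty}/K$) are both correct but the first is not actually needed for the formula, since if $\det f=0$ every generating minor vanishes and both sides are the zero ideal.
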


\begin{proof}
If $v \in S_{\infty} \cup S_{p}$, we know 
$W_{v}(K_{\infty}/k)^{\circ}_{\ZZ_{p}}=\Lambda e_{v}'$.
For $v \in S' \setminus (S_{\infty} \cup S_{p})$, we have
$W_{v}(K_{\infty}/k)^{\circ}_{\ZZ_{p}}
=(1, \frac{N_{I_{v}}}{1-{\mathcal F}_{v}}) \Lambda e_{v}'$
by (\ref{ev'isomorphism}). 
Therefore, we have
$$(W(K_{\infty}/k)^{\circ}_{\ZZ_{p}})^-=
\bigoplus_{v \in S_{\infty} \cup S_{p}} \Lambda^- e_{v}'
\oplus
\bigoplus_{v \in S' \setminus (S_{\infty} \cup S_{p})}
(1, \frac{N_{I_{v}}}{1-{\mathcal F}_{v}}) \Lambda^- e_{v}'.
$$
Therefore, it follows from (\ref{e5}) that 
$$\Fitt_{\Lambda^-}(((Cl_{K_{\infty},p}^{T})^{\vee})^-)
= (\prod_{v \in S' \setminus (S_{\infty} \cup S_{p})}
(1, \frac{N_{I_{v}}}{1-{\mathcal F}_{v}})) \det f \ .
$$
\end{proof}

Our final task is to determine $\det f$. 

For a finite set $S$ which contains all ramifying primes in $K_{\infty}$, 
we denote by $\theta_{K_{n},S}^{T}$ the 
$(S,T)$-modified Stickelberger element as in (\ref{ThetaST}), and 
by $\theta_{K_{\infty},S}^{T}$ its projective limit (for $n \gg 0$) 
in $\Lambda^-$.
We simply write $\theta_{K_{\infty}}^{T}$ when $S=S_{\rm ram}$. 
Also, for an intermediate CM-subfield $F$ of $K/k$
and the cyclotomic $\ZZ_{p}$-extension $F_{\infty}/F$, we define 
$\theta_{F_{\infty}}^{T}$ to be 
$\theta^{T}_{F_{\infty}, S_{\rm ram}(F_{\infty}/k)}$ where  
$S_{\rm ram}(F_{\infty}/k)$ is the set of all ramifying primes in 
$F_{\infty}/k$. 
We also use elements $\theta_{F_{\infty},S}^{T \hspace{0.5mm} \#}$, 
$\theta_{F_{\infty}}^{T \hspace{0.5mm} \#}$,... where 
$\#$ is the involution of the group ring induced by $\sigma \mapsto 
\sigma^{-1}$ for elements $\sigma$ in the group
as in \S \ref{MainConjecture}. 

\begin{Lemma} \label{ML2}
We assume $\mu=0$ for $K_{\infty}/k$. 
We have  
$$(\det f) \Lambda^-
=\theta_{K_{\infty},S'}^{T \hspace{0.5mm} \#}\Lambda^-$$ 
as ideals of $\Lambda^-$. 
\end{Lemma}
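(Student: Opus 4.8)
The plan is to compute $\det f$ by comparing it, character by character (or rather, piece by piece over the cyclotomic tower), with the finite-level determinants that were already identified in Theorem \ref{theorem2}. The key point is that the exact sequence (\ref{e5}) is the projective limit of the sequences (\ref{e4}), and the basis $(e_v')_{v \in S'}$ of $W' \subset (W(K_\infty/k)^\circ_{\ZZ_p})^-$ is by construction the projective limit of bases related to the $(e_v)_{v \in S'}$ used at each layer $K_n$. So $\det f$ should be, up to $\Lambda^\times$, the projective limit of the finite-level determinants $\det \psi^\circ_{K_n}$ (computed with respect to the analogous bases), after correcting for the discrepancy between the basis $e_v'$ and the basis $e_v$ at each finite level.

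First I would pin down that discrepancy. At each finite level $K_n$, Theorem \ref{theorem2} (1) gives (under eTNC, hence unconditionally in the Iwasawa setting once we invoke $\mu=0$ — but see below) that $\det \psi^\circ_{K_n}$, taken with respect to $(e_v)_{v\in S'}$ and the dual of the good basis of $\GA^\circ_{K_n}$, equals $(\omega^{T}_{K_n})^{\#}\prod_{v \in S'\setminus S_\infty} h_{v,n}$. On the other hand, in \S\ref{sec4} the generator $e_v'$ was chosen as $c_v((1,0))$ for finite non-$p$-adic $v$ and for $p$-adic $v$, whereas $e_v = c_v((1,1))$ was used at finite level; for infinite $v$ they agree. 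Using (\ref{cw10}) and (\ref{BasisIsom2}), the change of basis from $(e_v)$ to the limit of appropriate scalar multiples of $e_v'$ introduces exactly the factors $1 - g_v^{-1}N_{I_v}$ at ramified $v$ and $g_v^{-1}$ at unramified $v$ — the same factors that appeared in the proof of Theorem \ref{theorem2} via (\ref{equationramifiedcase}) and (\ref{equationunramifiedcase}). Tracking these through, the product $\prod_v h_{v,n}$ should cancel against the change-of-basis factors and the $(1, \frac{N_{I_v}}{1-\mathcal{F}_v})$ appearing in Lemma \ref{ML}, leaving $\theta^{T\#}_{K_\infty,S'}$ in the limit — consistent with the statement. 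The cleanest route is probably: take the limit of the exact sequence, use that $\GA^\circ(K_\infty/k)^-_{\ZZ_p}$ is free over $\Lambda^-$, identify the image of $W'$ under $f$ with the image of $\lim W^\circ_{K_n,S_\infty}$ under $\lim \psi^\circ_{K_n}$ rescaled, and read off $\det f$ from the known finite-level determinants plus the bookkeeping of (\ref{ComparisonBasis1})–(\ref{ComparisonBasis2}) type identities passed to the limit.

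The main obstacle is that Theorem \ref{theorem2} (1) was proved \emph{assuming Conjecture \ref{C2}}, i.e. assuming eTNC, which we are not allowed to assume here — the whole point of \S\ref{sec4} is to be unconditional under $\mu=0$. So the finite-level input cannot be the clean determinant formula; instead I expect the proof to use the Iwasawa Main Conjecture (which, under $\mu=0$, is a theorem — via Wiles, or via the equivalence with the relevant case of eTNC over $\Lambda$), or more precisely the analytic class number formula / the Main Conjecture comparison that identifies $\mathrm{char}_{\Lambda^-}$ of the relevant Iwasawa module with $\theta^{T\#}_{K_\infty}$. Concretely: (i) from (\ref{e5}) and Lemma \ref{ML} one gets $\Fitt_{\Lambda^-}(((Cl^T_{K_\infty,p})^\vee)^-)$ in terms of $\det f$; (ii) the characteristic ideal of $((Cl^T_{K_\infty,p})^\vee)^-$ is, by the Main Conjecture under $\mu=0$, generated by $\theta^{T\#}_{K_\infty}$ (here one needs that $\mu=0$ forces the module to have no nonzero finite $\Lambda$-submodule, so Fitting ideal and characteristic ideal coincide up to the extra local factors that are accounted for); (iii) chasing the local factors $(1, \frac{N_{I_v}}{1-\mathcal F_v})$ and the relation between $\theta^T_{K_\infty,S'}$ and $\theta^T_{K_\infty}=\theta^T_{K_\infty,S_{\mathrm{ram}}}$ (which differ exactly by Euler factors $1-\mathcal F_v^{-1}$ at $v \in S'\setminus(S_\infty\cup S_{\mathrm{ram}})$, matching the unramified specialization of $(1,\frac{N_{I_v}}{1-\mathcal F_v})$ to $(1, \frac{1}{1-\mathcal F_v})$, whose determinant-style contribution is a unit times $1-\mathcal F_v$ — wait, more carefully one needs $(1,\frac{1}{1-\mathcal F_v})\Lambda = \frac{1}{1-\mathcal F_v}\Lambda$ is not integral, so the correct reading is via $h_v$-type normalization), yields $(\det f)\Lambda^- = \theta^{T\#}_{K_\infty,S'}\Lambda^-$. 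The delicate bookkeeping is step (iii): making sure the fractional-ideal factors $(1,\frac{N_{I_v}}{1-\mathcal F_v})$ at non-$p$-adic ramified $v$ combine with $\omega^T$-to-$\theta^T_{S'}$ correction and with the Main Conjecture output so that everything is an honest ideal of $\Lambda^-$ and the $\mu=0$ hypothesis is exactly what removes the potential pseudo-null defect between $\Fitt$ and $\mathrm{char}$. I would isolate that as a separate computational lemma and otherwise keep the argument at the level of: limit of (\ref{e4}) $+$ Lemma \ref{ML} $+$ Main Conjecture under $\mu=0$ $+$ change-of-basis bookkeeping $\Rightarrow$ the claimed equality.
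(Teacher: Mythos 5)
Your overall instincts are right: reject the eTNC route, invoke the Iwasawa main conjecture of Wiles, and use $\mu=0$ to control the passage from characteristic-ideal information back to $\Lambda^-$. But the specific route you sketch in (i)--(iii) has a gap that the paper's proof avoids, and you are missing the technical device that makes the argument go.

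The gap: you propose to run Lemma~\ref{ML} ``backwards,'' i.e.\ to deduce $\det f$ from $\Fitt_{\Lambda^-}\bigl(((Cl^T_{K_\infty,p})^\vee)^-\bigr)$ together with the main conjecture. This is both circular (determining that Fitting ideal is precisely the content of Theorem~\ref{MT2}, which is proved \emph{from} Lemma~\ref{ML2}) and imprecise: your parenthetical ``$\mu=0$ forces no nonzero finite submodule, so Fitting ideal and characteristic ideal coincide'' is not true; for a general torsion $\Lambda$-module the Fitting ideal is contained in the characteristic ideal and can be strictly smaller even with no finite submodule (equality holds for modules of projective dimension $\le 1$, not in general). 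The paper sidesteps this entirely by \emph{not} routing through $\Fitt(Cl^\vee)$. Instead it works directly with the cokernel $\GA^\circ/\mathrm{im}\,f$, where $f:W'\to\GA^\circ$ is a map of free $\Lambda^-$-modules of the same rank, so $\Fitt(\GA^\circ/\mathrm{im}\,f)=(\det f)$ on the nose; and it derives from (\ref{e5}) the short exact sequence $0\to\bigoplus_v \Lambda^-/(1-\mathcal F_v,\Delta I_v)\to\GA^\circ/\mathrm{im}\,f\to Cl^\vee\to 0$. This converts the problem into comparing characteristic ideals, a multiplicative invariant, without any Fitting-vs-char ambiguity.

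The missing device: the comparison has to be done $\chi$-component by $\chi$-component, taking $\chi$-quotients and computing characteristic ideals over the discrete-valuation-ring-like $\OO_\chi[[t]]$. This is where Wiles's theorem enters, giving $\mathrm{char}([Cl^\vee]_\chi)=((\theta^{T\#}_{K_{\chi\infty}})_\chi)$ with the field $K_{\chi\infty}$ attached to $\ker\chi$, and where the local modules contribute $(1-\mathcal F_v)_\chi$ exactly when $\chi|_{I_v}=1$. The identity $\mathrm{res}_{K_{\chi\infty}}(\theta^{T\#}_{K_\infty,S'})=\prod_{\chi|_{I_v}=1}(1-\mathcal F_v)\,\theta^{T\#}_{K_{\chi\infty}}$ then packages everything into $((\det f)_\chi)=((\theta^{T\#}_{K_\infty,S'})_\chi)$. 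The role of $\mu=0$ is not to equate Fitting with characteristic ideals, but to glue: one needs a separate lemma (stated in the paper just after Lemma~\ref{ML2}) saying that if two elements of $\Lambda$ have $\mu$-invariant zero in every $\chi$-component and generate the same ideal componentwise, then they generate the same ideal in $\Lambda$. Without the $\chi$-quotient technique and this gluing lemma, your argument cannot get off the ground, because $\Lambda^-$ is not a domain and ``characteristic ideal over $\Lambda^-$'' is not a well-posed notion to feed directly into the main conjecture.
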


\begin{proof}
We write $\GA^{\circ}=\GA^{\circ}(K_{\infty}/k)^-_{\ZZ_{p}}$, 
and 
$Cl^{\vee}=((Cl_{K_{\infty},p}^{T})^{\vee})^-$. 
Since 
$$(W_{v}(K_{\infty}/k)^{\circ}_{\ZZ_{p}}/\Lambda e_{v}')^- 
\simeq \Lambda^-/(1-{\mathcal F}_{v}, \Delta I_{v}),$$
for $v \in S' \setminus (S_{\infty} \cup S_{p})$, the exact 
sequence (\ref{e5}) yields an exact sequence 
$$
0 \longrightarrow  \bigoplus_{v \in S' \setminus (S_{\infty} \cup S_{p})}
\Lambda^-/(1-{\mathcal F}_{v}, \Delta I_{v})
\longrightarrow \GA^{\circ}/\ii f 
\longrightarrow Cl^{\vee} \longrightarrow 0.
$$

Since $\Gal(K_{\infty}/k)$ is abelian and an extension of 
$\ZZ_{p}$ by a finite abelian group, we can write 
$\Gal(K_{\infty}/k) \simeq G' \times \ZZ_{p}$ for 
some finite subgroup $G'$. 
Let $K'$ be the field such that $\Gal(K_{\infty}/K')=\ZZ_{p}$, 
$\Gal(K'/k)=G'$, $K' \cap k_{\infty}=k$. 
By taking $K=K'$ from the first, 
we may assume $K \cap k_{\infty}=k$. 
Then $\Lambda$ is isomorphic to the power series ring 
$\ZZ_{p}[G][[t]]$. 

For an odd character $\chi$ of $G$, we consider the $\chi$-quotient 
only in the proof of this lemma. 
For a $\ZZ_{p}[G]$-module $M$ and 
$\chi: G \longrightarrow \overline{\QQ}_{p}^{\times}$ 
which is a character of $G$, whose image is in an algebraic 
closure of $\QQ_{p}$, we define the $\chi$-quotient 
$[M]_{\chi}$ by $[M]_{\chi}=M \otimes_{\ZZ_{p}[G]} \OO_{\chi}$
where $\OO_{\chi}=\ZZ_{p}[\ii \chi]$ on which 
$G$ acts via $\chi$. 
For an element $x$ of $M$, the image of $x$ in $[M]_{\chi}$ 
is denoted by $x_{\chi}$. 

Taking the $\chi$-quotients of the above exact sequence, we get 
an exact sequence
$$
\bigoplus_{v \in S' \setminus (S_{\infty} \cup S_{p})}
[\Lambda^-/(1-{\mathcal F}_{v}, \Delta I_{v})]_{\chi}
\longrightarrow [\GA^{\circ}/\ii f]_{\chi} 
\longrightarrow [Cl^{\vee}]_{\chi} \longrightarrow 0.
$$
The kernel of the first map is finite
since 
$$
\bigoplus_{v \in S' \setminus (S_{\infty} \cup S_{p})}
[(\Lambda^-/(1-{\mathcal F}_{v}, \Delta I_{v})) 
\otimes_{\ZZ_{p}} \QQ_{p}]_{\chi}
\longrightarrow [(\GA^{\circ}/\ii f) \otimes_{\ZZ_{p}} \QQ_{p}]_{\chi}
$$
is injective.  
We consider the characteristic ideals over $[\Lambda]_{\chi}=
\OO_{\chi}[[t]]$. 
We know $\cc([\GA^{\circ}/\ii f]_{\chi})
=((\det f)_{\chi})$.
If $\chi$ is trivial on $I_{v}$, we have
$\cc([\Lambda^-/(1-{\mathcal F}_{v}, \Delta I_{v})]_{\chi})
=((1-{\mathcal F}_{v})_{\chi})$. 
Otherwise, $[\Lambda^-/(1-{\mathcal F}_{v}, \Delta I_{v})]_{\chi}$ 
is finite.

Let $K_{\chi}$ be the intermediate field of $K/k$ 
corresponding to $\Ker \chi$,
and $K_{\chi \hspace{0.6mm} \infty}$ 
its cyclotomic $\ZZ_{p}$-extension. 
Then the characteristic ideal of $[Cl^{\vee}]_{\chi}$ 
is generated by 
$(\theta_{K_{\chi \hspace{0.5mm} \infty}}^{T \hspace{0.5mm} \#})_{\chi}$
by the main conjecture proved by Wiles \cite{Wiles}.
Therefore, the above exact sequence implies that  
$$
\cc([\GA^{\circ}/\ii f]_{\chi})
=((\det f)_{\chi})
=((\prod_{\chi_{\mid I_{v}=1}} (1-{\mathcal F}_{v})_{\chi}) 
(\theta_{K_{\chi \hspace{0.5mm} \infty}}^{T \hspace{0.5mm} \#})_{\chi})
$$
where $v$ ranges over all primes in $S'$ which are unramified in 
$K_{\chi \hspace{0.6mm} \infty}$. 
Let 
$${\rm res}_{K_{\chi \hspace{0.6mm} \infty}}:
\Lambda=\ZZ_{p}[[\Gal(K_{\infty}/k)]]
\longrightarrow 
\ZZ_{p}[[\Gal(K_{\chi \hspace{0.6mm} \infty}/k)]]
$$
be the restriction map. 
Since we know 
$$ {\rm res}_{K_{\chi \hspace{0.6mm} \infty}}
(\theta_{K_{\infty},S'}^{T \hspace{0.5mm} \#})
=
\prod_{\chi_{\mid I_{v}=1}} (1-{\mathcal F}_{v})
\theta_{K_{\chi \hspace{0.5mm} \infty}}^{T \hspace{0.5mm} \#},
$$
we obtain
$$
((\det f)_{\chi})=
((\theta_{K_{\infty},S'}
^{T \hspace{0.5mm} \#})_{\chi})
$$
as ideals of $\OO_{\chi}[[t]]$. 
Since this equality holds for any odd character $\chi$ of $G$,
the conclusion of Lemma \ref{ML2} follows from the next lemma. 
\end{proof}

\begin{Lemma} 
Let $a$, $b$ be elements of $\Lambda$ such that 
the $\mu$-invariants of $a_{\chi}$ and $b_{\chi}$ 
in $\OO_{\chi}[[t]]$ are zero for any character $\chi$ 
of $G$.
If 
$(a_{\chi})=(b_{\chi})$ 
holds as ideals of $\OO_{\chi}[[t]]$ for all $\chi$ of $G$, 
we get 
$(a)=(b)$
as ideals of $\Lambda$.
\end{Lemma}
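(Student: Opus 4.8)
The plan is to prove the equality of ideals by embedding $\Lambda$ into a product of well-behaved rings indexed by the characters of $G$ and using the vanishing of $\mu$ to control the (finite) cokernel of this embedding. By the symmetry of the hypotheses in $a$ and $b$, it suffices to prove $a \in (b)$.

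First I would reduce to the case where $\Lambda$ is local. Writing $G = G_{p} \times G'$ with $G_{p}$ the $p$-Sylow subgroup, $\Lambda$ is a finite product of local rings of the form $\Lambda_{0} = \OO[H][[t]]$ with $H = G_{p}$ and $\OO = \ZZ_{p}[\image \psi]$ the ring of integers of the unramified extension $\QQ_{p}(\psi)/\QQ_{p}$ attached to a character $\psi$ of $G'$ (so $p$ is a uniformizer of $\OO$). An equality of ideals in $\Lambda$ holds iff it holds in every factor, and $\chi$-quotients distribute over the factors, so I may work with $\Lambda_{0}$. Now $\Lambda_{0}$ embeds into $R := \bigoplus_{\chi} \OO_{\chi}[[t]]$, the sum over the $\QQ_{p}(\psi)$-conjugacy classes of characters $\chi$ of $H$, where $\OO_{\chi} = \OO[\image \chi]$ and $[\Lambda_{0}]_{\chi} = \OO_{\chi}[[t]]$; each summand $\OO_{\chi}[[t]]$ is a two-dimensional regular local domain. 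Reading off coefficients of $t$, the cokernel $C := R/\Lambda_{0}$ has the shape $D[[t]]$ with $D = (\bigoplus_{\chi}\OO_{\chi})/\OO[H]$ a finite $\OO$-module (both sides are free $\OO$-modules of rank $\#H$, and the inclusion is injective). Consequently $C$ is finitely generated over $\Lambda_{0}$, killed by a power of $p$, and $t$ acts on $C$ as a non-zero-divisor.

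The key step is to show that $b$, and likewise $a$, acts on $C$ as a non-zero-divisor. Since $t$ is a non-zero-divisor on $C$, no associated prime of the $\Lambda_{0}$-module $C$ contains $t$; since $C$ is killed by a power of $p$, every associated prime contains $p$; and the only prime of $\Lambda_{0}$ containing $p$ but not $t$ is $\mathfrak q_{0} := (p, \Delta H)$, the preimage of the nilradical of $\Lambda_{0}/(p) = (\OO/p)[H][[t]]$, because $H$ is a $p$-group, so $(\OO/p)[H]$ is local with nilpotent augmentation ideal and residue field $\OO/p$, whence $\Lambda_{0}/\mathfrak q_{0} \simeq (\OO/p)[[t]]$. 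Thus $\mathfrak q_{0}$ is the only candidate for an associated prime of $C$. But $\mathfrak q_{0}$ is the kernel of the composite $\Lambda_{0} \to \OO[[t]] \to (\OO/p)[[t]]$ (augment $H$, then reduce mod $p$), and the image of $b$ under the first arrow is exactly the component $b_{\psi}$ indexed by the character $\psi$ of $G$ cutting out this block; the hypothesis $\mu(b_{\psi}) = 0$ says precisely that $b_{\psi} \not\equiv 0 \pmod{p}$, i.e. $b \notin \mathfrak q_{0}$. Hence $b$ lies in no associated prime of $C$ and is a non-zero-divisor on $C$ (note that only the vanishing of $\mu$ for the characters of $G$ trivial on $G_{p}$ is actually used here).

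To conclude, for each $\chi$ the equality $(a_{\chi}) = (b_{\chi})$ in the domain $\OO_{\chi}[[t]]$ (with $a_{\chi}, b_{\chi} \neq 0$, which is forced by $\mu(a_{\chi}), \mu(b_{\chi}) < \infty$) gives $a_{\chi} = u_{\chi} b_{\chi}$ with $u_{\chi} \in \OO_{\chi}[[t]]^{\times}$. Setting $u = (u_{\chi})_{\chi} \in R$ we get $ub = (a_{\chi})_{\chi} = a \in \Lambda_{0}$, so the image $\bar u$ of $u$ in $C = R/\Lambda_{0}$ satisfies $b\,\bar u = 0$; since $b$ is a non-zero-divisor on $C$, $\bar u = 0$, that is $u \in \Lambda_{0}$, and $a = ub \in (b)$. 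Symmetrically $b \in (a)$, so $(a) = (b)$. The one real difficulty is that $\Lambda \to R$ fails to be surjective when $p \mid \#G$, producing the nonzero finite module $C$; the associated-primes computation, which is exactly the point where the hypothesis $\mu = 0$ enters, is what neutralizes it.
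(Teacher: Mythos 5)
Your proof is correct, and it takes a genuinely different route from the paper's. The paper invokes the equivariant Weierstrass preparation theorem of Burns--Greither (Proposition 2.1 of \cite{BG2}) to replace $a$ and $b$, up to units, by distinguished polynomials, performs componentwise division $a = bq + r$ with $\deg r < \deg b$, and deduces $r_\chi = 0$ for every $\chi$ from $(a_\chi) = (b_\chi)$, hence $r = 0$ and $(a) \subset (b)$, with $(b)\subset (a)$ by symmetry. You instead reduce to a local block $\Lambda_0 = \OO[H][[t]]$ ($H$ the $p$-Sylow of $G$), embed it into $R = \bigoplus_\chi \OO_\chi[[t]]$, and control the finite cokernel $C = R/\Lambda_0$ via its associated primes: since $C$ is $p$-power torsion with $t$ a non-zero-divisor, the only possible associated prime is the unique prime $\mathfrak q_0 = (p, \Delta H)$ of $\Lambda_0$ containing $p$ but not $t$; the hypothesis $\mu = 0$ for the character trivial on $H$ shows $a, b \notin \mathfrak q_0$, so they act invertibly on $C$, which forces the tuple of units $u = (u_\chi)$ with $ub = a$ in $R$ to lie in $\Lambda_0$, giving $a \in (b)$. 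Both arguments are standard in flavor, but yours is more structural commutative algebra while the paper's leans on the polynomial division algorithm; a minor bonus of your route is the observation that the full strength $\mu(b_\chi)=0$ is only needed for the characters trivial on the $p$-part of $G$ (elsewhere $b_\chi \neq 0$ suffices), whereas the preparation-theorem approach uses $\mu=0$ across the board.
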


\begin{proof}
This lemma seems to be well-known, but we give here a proof.  
By Proposition 2.1 in \cite{BG2} we may assume that 
$a$, $b$ are distinguished polynomials in the sense of \cite{BG2}.
We write $a=bq+r$ for some $q \in \Lambda$ and some polynomial $r$
whose degree is smaller than the degree of $b$. 
Here, $\Lambda$ is semi local, and the degree means the vector 
of the degree of each component (see \cite{BG2} \S 2). 
The condition $(a_{\chi})=(b_{\chi})$ in $\OO_{\chi}[[t]]$ implies 
$r_{\chi}=0$ for any $\chi$, so we have $r=0$ and $(a) \subset (b)$. 
The converse is also true, and we get $(a)=(b)$. 
%
\end{proof}

Now we can prove the main theorem in this section. 
Recall that $S^{{\rm non} \ p}_{\rm ram}=S_{\rm ram} \setminus S_{p}$. 

\begin{Theorem} \label{MT2}
Assuming $\mu=0$ for $K_{\infty}/k$, we have 
$$\Fitt_{\Lambda^-}(((Cl_{K_{\infty},p}^{T})^{\vee})^-)
=(\prod_{v \in S^{{\rm non} \hspace{0.5mm} p}_{\rm ram}}
(1, \frac{N_{I_{v}}}{1-{\mathcal F}_{v}})) 
\theta_{K_{\infty}}^{T \hspace{0.5mm} \#} \ .$$
\end{Theorem}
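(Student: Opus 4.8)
The proof is essentially an assembly of the two lemmas just established. By Lemma \ref{ML} we have
$$\Fitt_{\Lambda^-}(((Cl_{K_{\infty},p}^{T})^{\vee})^-)
= (\prod_{v \in S' \setminus (S_{\infty} \cup S_{p})}
(1, \tfrac{N_{I_{v}}}{1-{\mathcal F}_{v}})) \cdot (\det f)\Lambda^-,$$
and by Lemma \ref{ML2} (using the hypothesis $\mu=0$) the principal ideal $(\det f)\Lambda^-$ equals $\theta_{K_{\infty},S'}^{T\hspace{0.5mm}\#}\Lambda^-$. So the entire content of Theorem \ref{MT2} is to reconcile the right-hand side of the above display with the claimed answer, in which (i) the $S'$-truncated Stickelberger element is replaced by $\theta_{K_{\infty}}^{T}$ (the one for $S = S_{\rm ram}$), and (ii) the product over $v \in S' \setminus (S_{\infty}\cup S_p)$ is replaced by the product over $v \in S^{{\rm non}\ p}_{\rm ram} = S_{\rm ram}\setminus S_p$. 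Both changes concern exactly the primes $v$ in $S' \setminus S_{\rm ram}$, i.e.\ the auxiliary primes that are unramified in $K_{\infty}/k$.

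\medskip

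\noindent\textbf{Step 1: the Euler factor bookkeeping.} For $v \in S' \setminus S_{\rm ram}$ we have $I_v = 0$, hence $N_{I_v} = 1$ and the local factor $(1, \tfrac{N_{I_v}}{1-{\mathcal F}_v})\Lambda = (1, \tfrac{1}{1-{\mathcal F}_v})\Lambda$. Since $1-{\mathcal F}_v$ is a nonzerodivisor in $\Lambda$ (its image in each $\chi$-quotient $\OO_\chi[[t]]$ is nonzero because ${\mathcal F}_v$ has infinite order in $\Gal(K_\infty/k)$, the cyclotomic $\ZZ_p$-extension being unramified outside $S_p$ at $v$), we get $(1, \tfrac{1}{1-{\mathcal F}_v})\Lambda = \tfrac{1}{1-{\mathcal F}_v}\Lambda$. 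Thus
$$\prod_{v \in S' \setminus (S_\infty \cup S_p)} (1, \tfrac{N_{I_v}}{1-{\mathcal F}_v})\Lambda
= \Big(\prod_{v \in S^{{\rm non}\ p}_{\rm ram}} (1, \tfrac{N_{I_v}}{1-{\mathcal F}_v})\Big)
\cdot \prod_{v \in S' \setminus S_{\rm ram}} \tfrac{1}{1-{\mathcal F}_v}\ \Lambda$$
as fractional ideals of $\Lambda$ (all factors being invertible fractional ideals, the rearrangement is legitimate).

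\medskip

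\noindent\textbf{Step 2: removing the extra Euler factors from the Stickelberger element.} It remains to check
$$\Big(\prod_{v \in S' \setminus S_{\rm ram}} \tfrac{1}{1-{\mathcal F}_v}\Big)\theta_{K_\infty, S'}^{T\hspace{0.5mm}\#}\Lambda^-
= \theta_{K_\infty}^{T\hspace{0.5mm}\#}\Lambda^-.$$
This is the standard relation between Stickelberger elements for different $S$: removing the Euler factor at an unramified prime $v$ from the $L$-function multiplies the Stickelberger element by $(1-{\mathcal F}_v^{-1}N(v)^{1-s})$ evaluated at the relevant point; in the Iwasawa-theoretic (cyclotomic) normalization, where $N(v)^{1-s}$ becomes the action of ${\mathcal F}_v$ via the cyclotomic character composed with projection to $\Gal(K_\infty/K)$, one gets precisely the factor $1 - {\mathcal F}_v$ (this is the $p$-adic interpolation property of the Deligne--Ribet--Cassou-Nogu\`es $p$-adic $L$-function; compare the $\chi$-component computation ${\rm res}_{K_{\chi\infty}}(\theta^{T\#}_{K_\infty,S'}) = \prod_{\chi|_{I_v}=1}(1-{\mathcal F}_v)\,\theta^{T\#}_{K_{\chi\infty}}$ already used in the proof of Lemma \ref{ML2}, applied with $K_\chi = K$). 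Applying the involution $\#$ turns $1 - {\mathcal F}_v^{-1}$ into $1 - {\mathcal F}_v$ back, so after taking $\#$ one checks directly that $\theta_{K_\infty, S'}^{T} = \prod_{v \in S'\setminus S_{\rm ram}}(1 - {\mathcal F}_v^{-1})\,\theta_{K_\infty, S_{\rm ram}}^{T}$, whence the displayed identity. Substituting Steps 1 and 2 into the formula of Lemma \ref{ML} gives exactly the assertion of Theorem \ref{MT2}.

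\medskip

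\noindent\textbf{Main obstacle.} The genuinely substantive inputs — the exact sequence \eqref{e5} and the identification of $(\det f)$ with the characteristic-ideal side via Wiles' main conjecture under $\mu = 0$ — are already packaged in Lemmas \ref{ML} and \ref{ML2}, so the remaining work here is purely formal. The only place demanding care is Step 2: one must make sure that the $S$-variation of the Iwasawa-theoretic Stickelberger element is stated and used with the correct normalization (the factor $1 - {\mathcal F}_v$ versus $1 - {\mathcal F}_v N(v)$, etc.), and that the passage between the $\chi$-by-$\chi$ identity and the identity over all of $\Lambda^-$ is justified — but the latter is exactly the elementary lemma on distinguished polynomials proved just above, applied once more with $\mu = 0$. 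So I expect no real obstacle beyond careful bookkeeping.
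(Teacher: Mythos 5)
Your proof is correct and follows essentially the same route as the paper: both arguments apply Lemmas \ref{ML} and \ref{ML2}, observe that for unramified $v$ the fractional ideal $(1,\tfrac{N_{I_v}}{1-\mathcal{F}_v})\Lambda$ collapses to $\tfrac{1}{1-\mathcal{F}_v}\Lambda$, and then cancel those factors against the Euler-factor relation $\theta_{K_\infty,S'}^{T\,\#}=\prod_{v\in S'\setminus(S_\infty\cup S_{\mathrm{ram}})}(1-\mathcal{F}_v)\,\theta_{K_\infty}^{T\,\#}$. The only cosmetic discrepancy is that your Steps 1--2 write the index set as $S'\setminus S_{\mathrm{ram}}$ where it should be $S'\setminus(S_\infty\cup S_{\mathrm{ram}})$ (or, equivalently, restricted to finite primes), but this does not affect the argument.
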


\begin{proof}
By Lemmas \ref{ML} and \ref{ML2}, 
we have
\begin{eqnarray*}
\Fitt_{\Lambda^-}(((Cl_{K_{\infty},p}^{T})^{\vee})^-)
& = & (\prod_{v \in S' \setminus (S_{\infty} \cup S_{p})}
(1, \frac{N_{I_{v}}}{1-{\mathcal F}_{v}})) \det f \\
& = & (\prod_{v \in S' \setminus (S_{\infty} \cup S_{p})}
(1, \frac{N_{I_{v}}}{1-{\mathcal F}_{v}})) 
\theta_{K_{\infty},S'}^{T \hspace{0.5mm} \#} \ .
\end{eqnarray*}
If $v$ is unramified, we know 
$(1, \frac{N_{I_{v}}}{1-{\mathcal F}_{v}}) (1-{\mathcal F}_{v})
=\Lambda$,
so using 
$$
\theta_{K_{\infty},S'}^{T \hspace{0.5mm} \#}
=\prod_{S' \setminus (S_{\infty} \cup S_{\rm ram})} (1-{\mathcal F}_{v})
\theta_{K_{\infty}}^{T \hspace{0.5mm} \#},
$$
we obtain  
$$
(\prod_{v \in S' \setminus (S_{\infty} \cup S_{p})}
(1, \frac{N_{I_{v}}}{1-{\mathcal F}_{v}})) 
\theta_{K_{\infty},S'}^{T \hspace{0.5mm} \#} 
=
(\prod_{v \in S^{{\rm non} \hspace{0.5mm} p}_{\rm ram}}
(1, \frac{N_{I_{v}}}{1-{\mathcal F}_{v}})) 
\theta_{K_{\infty}}^{T \hspace{0.5mm} \#}.$$
This completes the proof of Theorem \ref{MT2}.

\end{proof}

\begin{Remark} \label{FinalRemark}
\begin{rm}
(1) Greither and Popescu proved that 
$\theta_{K_{\infty}}^{T \hspace{0.5mm} \#}$ is in 
the Fitting ideal of $((Cl_{K_{\infty},p}^{T})^{\vee})^-$ 
in \cite{GreiPo}. The above theorem gives a refinement 
in the sense that it gives a 
full description of the Fitting ideal. \\
(2) The author obtained a similar result for 
the non-Teichm\"{u}ller character components 
of class groups with $T=\emptyset$, 
assuming Leopoldt's conjecture 
in \cite{Ku3} Theorem A.5. 
Theorem \ref{MT2} 
implies Theorem A.5 in \cite{Ku3} without assuming Leopoldt's conjecture
by choosing $T$ suitably.  
Thus Theorem \ref{MT2} is also a generalization of the main result in 
the Appendix in \cite{Ku3}.\\
(3) When we study the non-Teichm\"{u}ller character components 
of the class groups (and the $T$-modified class groups), 
we saw that the duals of the class groups are
suitable objects for studying their Galois module structure 
in our previous papers (see \cite{Ku3}, \cite{Grei4}, \cite{Grei},
\cite{GreiKuri}, for example). 
One can see by Proposition \ref{P2} in this paper why the dual of 
the class group is relatively easier to handle than the class group 
itself. 
Concerning the study on the 
dual of the Teichm\"{u}ller character components, 
see \cite{GKT} and \cite{GKK}.
\end{rm}
\end{Remark}

\end{document}